\pgfplotsset{compat=1.11}
\pgfplotsset{compat=1.6}
\setlist[enumerate]{leftmargin=.5in}
\setlist[itemize]{leftmargin=.5in}
\crefname{hypothesis}{Hypothesis}{Hypotheses}
\title{On Sum of Squares Representation of Convex Forms \\and Generalized Cauchy-Schwarz Inequalities}
\author{Bachir El Khadir\thanks{The author is with the department of Operations Research and Financial Engineering at Princeton University (\email{bkhadir@princeton.edu}, \url{http://bachirelkhadir.com}).
    This work was partially supported by the DARPA Young Faculty Award, the Princeton SEAS Innovation Award, the NSF CAREER Award, and the MURI Award of the AFOSR.
  }}
\newcommand{\ix}{\mathbf{x}}
\newcommand{\iy}{\mathbf{y}}
\newcommand{\iz}{\mathbf{z}}
\newcommand{\iu}{\mathbf{u}}
\newcommand{\iv}{\mathbf{v}}
\newcommand{\ie}[1]{\mathbf{e_{#1}}}
\newcommand{\RE}[1]{\operatorname{Re}{(#1)}}
\newcommand{\IM}[1]{\operatorname{Im}{(#1)}}
\newcommand{\FORMS}[2]{H_{#1,#2}}
\newcommand{\PSD}[2]{P_{#1,#2}}
\newcommand{\SOS}[2]{\Sigma_{#1,#2}}
\newcommand{\CONVEX}[2]{C_{#1,#2}}
\def\R{\mathbb R}
\def\C{\mathbb C}
\def\N{\mathbb N}
\def\iim{i}
\DeclareMathOperator{\cone}{cone}
\begin{document}

\maketitle

\begin{abstract}
A convex form of degree larger than one is always nonnegative since it vanishes together with its gradient at the origin. In 2007, Parrilo asked if convex forms are always sums of squares. A few years later, Blekherman answered the question in the negative by showing through volume arguments that for high enough number of variables, there must be convex forms of degree as low as $4$ that are not sums of squares. Remarkably, no examples are known to date. In this paper, we show that all convex forms in $4$ variables and of degree $4$ are sums of squares. We also show that if a conjecture of Blekherman related to the so-called Cayley-Bacharach relations is true, then the same statement holds for convex forms in $3$ variables and of degree $6$. These are the two minimal cases where one would have any hope of seeing convex forms that are not sums of squares (due to known obstructions). A main ingredient of the proof is the derivation of certain ``generalized Cauchy-Schwarz inequalities'' which could be of independent interest.
\end{abstract}

% REQUIRED
\begin{keywords}
  Convex Polynomials, Sum of Squares of Polynomials, Cauchy-Schwarz Inequality.
\end{keywords}

% REQUIRED
\begin{AMS} 
  14N05, 52A20, 52A40
\end{AMS}

\section{Introduction and Main Result}

The set $\FORMS{n}{k}$ of homogeneous real polynomials (forms) in $n$ variables and of degree $k$ is a central subject of study in algebraic geometry. When the degree $k \eqqcolon 2d$ is even, three convex cones inside $\FORMS{n}{k}$ have received considerable interest. %
The cone of {\it nonnegative} forms
\[\PSD{n}{2d} \coloneqq \{p \in \FORMS{n}{2d} \; | \; p(\ix) \ge 0 \; \forall \ix \in \R^n\},\]
the cone of {\it sum of squares (sos)} forms
\[\SOS{n}{2d} \coloneqq \{p \in \FORMS{n}{2d} \; | \; p = \sum_i q_i^2 \text{ for some forms $q_i \in \FORMS{n}{d}$}\},\]
and the cone of {\it convex} forms
\[\CONVEX{n}{2d} \coloneqq \{p \in \FORMS{n}{2d} \; | \; \nabla^2p(\ix) \succeq 0 \; \forall \ix \in \R^n  \},\]
where $\nabla^2 p(\ix)$ stands for the Hessian of the form $p$ at $\ix$, and the symbol $\succeq$ stands for the partial ordering generated by the cone of positive semidefinite matrices.

% \begin{figure}
%   \centering
%   \include{tikz_figures/cones}
%   \caption{Cones of interest.}
% \end{figure}

The systematic study of the interplay between the cones $\PSD{n}{2d}$ and $\SOS{n}{2d}$ was undertaken by Hilbert at the end of the nineteenth century, when he showed that these two cones are different unless $n \le 2$, $2d \le 2$ or $(n,2d) = (3,4)$ \cite{hilbert_1888}. Even though Hilbert's work provided a strategy for constructing nonnegative forms that are not sos for the smallest number of variables and degrees possible (i.e., forms in $\PSD{3}{6}\setminus \SOS{3}{6}$ and $\PSD{4}{4}\setminus \SOS{4}{4}$), it took almost eighty years for the first explicit examples of such forms to be found by Motzkin and Robinson~\cite{arith_geom_ineq_motzkin_1967, nonnegative_not_sos_robinson_1969, reznick2000some}. See \cite{reznick_hilberts_2007} for a more thorough discussion of the history of this problem.

The relationship between $\CONVEX{n}{2d}$ and $\SOS{n}{2d}$ is much more complicated and it was an open problem for some time whether $\CONVEX{n}{2d} \subseteq \SOS{n}{2d}$ for all $n$ and $d$. (The reverse inclusion is of course false; e.g., $x^2y^2 \in \SOS{2}{4} \setminus \CONVEX{2}{4}$.) Note however, that we trivially have $\CONVEX{n}{2d} \subseteq \PSD{n}{2d}$ since a global minimum of a convex form is always at the origin where the form and its gradient vanish.

Studying the gap between the cone of sum of squares and the cone of convex polynomials is particularly relevant from an optimization point of view, where it is more naturally formulated in the {\it non-homogeneous} setting\footnote{While the properties of being nonnegative and sum of squares are preserved under the homogenization operation $p(\ix) \rightarrow y^{\operatorname{deg}(p)} p(\frac{\ix}{y})$, the property of convexity is not in general. For instance, the polynomial $x^2-1$ is convex, but its homogenization $x^2 - y^2$ is not.}.  Consider the problem of finding the minimum value $p^*$ that a convex polynomial $p$ takes on $\R^n$. Observe that
\[p^* = \max_{\gamma \in \R} \gamma \text{ s.t. } p - \gamma \text{ is nonnegative}.\]
The well-known machinery of ``sum of squares relaxation'' \cite{parrilo_minimizing_convex_2003, parrilo_sos_relaxation_2003}, for which efficient algorithms based on semidefinite programming exist, can be readily applied here to obtain a lower bound $p^{\text{sos}}$ on $p^*$:
\[p^{\text{sos}} \coloneqq \max_{\gamma \in \R} \gamma \text{ s.t. } p - \gamma \text{ is sos}.\]
Note that for any scalar $\gamma$, the polynomial $p - \gamma$ is convex. If we knew a priori that $p-\gamma$ is sos whenever it is nonnegative, then this relaxation becomes exact; i.e., $p^* = p^{\text{sos}}$.

%Another notable application of convex forms is in the construction of polynomial norms, i.e., norms that are the $2d^{\text{th}}\text{-root}$ of a nonnegative form $q$ of degree $2d$.  It is known that a necessary and sufficient condition for $q^{\frac1{2d}}$ to be a norm is that $q$ must be convex and positive definite (i.e., take only positive values except at the origin). See \cite{ahmadi_polynomial_norm_2019} and reference therein for more details about this topic.

Blekherman has recently shown that for any fixed degree $2d \ge 4$, as the number of variables $n$ goes to infinity, one encounters considerably more convex forms than sos forms \cite{blekherman_convex_not_sos_2009}. Remarkably however, there is not a single known example of a convex form that is not sos. Due to Hilbert's characterization of the cases of equality between the cone of nonnegative forms and the cone of sos forms, the smallest cases where one could have hope of finding such an example correspond to quaternary quartics ($(n,2d) = (4,4)$) and ternary sextics ($(n,2d) = (3, 6)$).
The goal of this paper
% is to shed some light on this issue and
prove that no such example exists among quaternary quartics.

\begin{theorem}\label{thm:convex_fourvar_quads_are_sos}
  Every convex quarternary quartic is sos, i.e., $\CONVEX{4}{4} \subseteq \SOS{4}{4}$.
\end{theorem}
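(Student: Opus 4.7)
The plan is to exploit Euler's homogeneous-function identity applied twice to the quartic $p$:
\[
p(\ix) \;=\; \tfrac{1}{12}\, \ix^{\!T}\, \nabla^2 p(\ix)\, \ix,
\]
in order to reduce the theorem to a statement about PSD matrix-valued polynomials. Setting $H(\ix) \coloneqq \nabla^2 p(\ix)$, convexity of $p$ is exactly the condition $H(\ix)\succeq 0$ for every $\ix\in\R^4$, and what has to be proved is that $H$ admits a factorization
\[
H(\ix) \;=\; \sum_{k} v_k(\ix)\,v_k(\ix)^{\!T},
\]
in which each $v_k:\R^4\to\R^4$ is linear (so each coordinate of $v_k(\ix)$ is a linear form in $\ix$). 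From such a decomposition, Euler's identity immediately gives
\[
p(\ix) \;=\; \tfrac{1}{12}\sum_{k}\bigl(\ix^{\!T} v_k(\ix)\bigr)^{2}\in\SOS{4}{4}.
\]
Equivalently, introducing an auxiliary tuple $\iy\in\R^4$, the goal is to show that the nonnegative biform $B(\ix,\iy) \coloneqq \iy^{\!T}H(\ix)\,\iy$, bihomogeneous of bidegree $(2,2)$, is a sum of squares of bilinear forms in $(\ix,\iy)$. A key piece of structure the proof must use is the symmetry $B(\ix,\iy)=B(\iy,\ix)$, which comes from the fact that the fourth derivative tensor $\partial^4p$ is totally symmetric in its four indices.

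Next I would unpack the constraints that $H(\ix)\succeq 0$ places on the individual entries. Each diagonal entry $H_{ii}(\ix)$ is a nonnegative quadratic form and hence an ordinary sum of squares of linear forms; the $2\times 2$ principal minors yield the pointwise Cauchy–Schwarz bounds $H_{ij}(\ix)^{2}\le H_{ii}(\ix)H_{jj}(\ix)$ for every pair $i,j$. These classical CS bounds do not, by themselves, produce an \emph{algebraic} factorization $H_{ij}=\sum_\ell a_\ell b_\ell$ of the off-diagonal entries compatible with the linear-form decompositions of the diagonals. The heart of the argument will therefore be to upgrade them into the generalized Cauchy–Schwarz inequalities advertised in the abstract: algebraic identities asserting that, whenever a family of quadratic forms in $\R^4$ satisfies a pointwise PSD constraint larger than a single $2\times 2$ block, one can solve for certain cross terms as explicit $\R$-linear combinations of products of linear forms. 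These identities are precisely what is needed to glue the linear-form factorizations of the diagonal entries of $H$ into the vectors $v_k(\ix)$ whose outer products reconstruct the whole matrix.

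With the generalized Cauchy–Schwarz inequalities in hand, I would reduce $H$ to a canonical form by a linear change of coordinates on $\R^4$ (for example, choosing coordinates so that $H(\ie{1})$ is diagonal and so that the rank profile of $H$ along a transverse direction is as simple as possible), and then run a short case analysis on the resulting rank pattern of $H$. In each case the generalized CS inequalities should supply the linear forms from which the $v_k(\ix)$ are assembled, completing the factorization of $H$ and hence the sos representation of $p$.

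The hardest part will be the derivation and careful application of the generalized Cauchy–Schwarz inequalities themselves. A purely biform-level argument is unavailable: nonnegative $(2,2)$-biforms in $(4,4)$ variables are not in general sums of squares of bilinear forms, so the proof must use in an essential way that $B(\ix,\iy)$ comes from a single quartic — equivalently, that it is $\ix\leftrightarrow\iy$ symmetric. This symmetry, combined with the CS-type constraints forced by $H(\ix)\succeq 0$, is what should distinguish the quaternary quartic case from the counterexamples Blekherman's volume argument guarantees in higher dimensions, and any successful proof must exploit it quantitatively.
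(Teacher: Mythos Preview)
Your plan has a fatal gap: the factorization you are aiming for,
\[
\nabla^{2}p(\ix)\;=\;\sum_{k} v_k(\ix)\,v_k(\ix)^{T}\qquad\text{with each }v_k\text{ linear},
\]
is precisely the statement that $p$ is \emph{sos-convex}. As the paper notes in the introduction (citing Ahmadi--Parrilo), there exist convex quaternary quartics that are \emph{not} sos-convex, so this factorization cannot exist in general and the programme collapses at its very first step. Your observation that the biform $B(\ix,\iy)=\iy^{T}\nabla^{2}p(\ix)\iy$ is symmetric under $\ix\leftrightarrow\iy$ is correct, but it does not rescue the argument: every Hessian biform of a quartic has this symmetry, including those of the known convex-but-not-sos-convex examples. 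So the symmetry alone cannot be ``what distinguishes the quaternary quartic case'' in the way you hoped.

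The paper's route is entirely different and does not pass through sos-convexity. It uses Blekherman's description of the hyperplanes separating $\SOS{4}{4}$ from $\PSD{4}{4}$: a nonnegative quaternary quartic is sos if and only if it satisfies certain inequalities on point evaluations at Cayley--Bacharach configurations (Theorem~\ref{thm:caracterization-sos}). The ``generalized Cauchy--Schwarz inequalities'' in the paper are genuine \emph{analytic} inequalities for convex forms,
\[
Q_p(\ix,\iy)\le\sqrt{p(\ix)p(\iy)}\quad\text{and}\quad |p(\iz)|\le Q_p(\iz,\bar\iz),
\]
with the sharp constants $A_2^*=B_2^*=1$ in degree four; these are then fed into Blekherman's point-evaluation criteria to conclude that $p$ is sos. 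They are not the algebraic factorization identities you envision, and the proof never attempts to build the vectors $v_k$.
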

Furthermore, we show that if a conjecture of Blekherman related to the so-called Cayley-Bacharach relations is true, no convex form which is not sos can exist among ternary sextics either, i.e., $\CONVEX{3}{6} \subseteq \SOS{3}{6}$.  % \cite[Conjecture 7.3]{blekherman_nonnegative_2012}

A possible plan of attack to show that a convex form is sos is to show that it is {\it sos-convex}. This concept, introduced by Helton and Nie \cite{helton_sosconvexity_2010}, is an algebraic sufficient condition for convexity which also imples the property of being sos. %, and is based on a sum of squares decomposition of the Hessian matrix. 
%Closely related to convexity, Hilton and Nie \cite{helton_sosconvexity_2010} introduced the notion of {\it sos-convexity},
%an algebraic sufficient condition for convexity and being sum of squares, introduced by Hilton and Nie \cite{helton_sosconvexity_2010}, that is based on a sum of squares decomposition of the Hessian matrix.
% While testing convexity of a form is NP-hard \cite{ahmadi_convexity_nphard_2013}, testing whether it is sos-convex can be reduced to solving a semidefinite program.
 This plan would not be successful for our purposes however, since there exist explicit examples of convex forms that are not sos-convex for both cases $(n,2d) = (4,4)$ and $(n,2d) = (3,6)$ \cite{ahmadi2013complete}. In fact, the problem of characterizing for which degrees $2d$ and number of variables $n$ sos-convexity is also a necessary condition for convexity, has been completely solved in \cite{ahmadi2013complete}. The authors prove that this is the case if and only if $n \le 2$, $2d \le 2$ or $(n, 2d) = (3, 4)$, i.e., the same cases for which $\PSD{n}{2d} = \SOS{n}{2d}$ as characterized by Hilbert, albeit for different reasons. %It is also known that all sos-convex forms are also sos \cite{helton_sosconvexity_2010}.

% In order to prove that a given convex form is sos, it is then tempting 

%Few algorithms have been developed since to produce explicit instances of convex forms that are not sos-convex  \cite{ahmadi_sos_convex_not_convex_2012} \cite[Theorem 2.3]{ahmadi_convexity_nphard_2013}. Since all sos-convex forms are also sos \cite{helton_sosconvexity_2010}, any form that is convex but not sos cannot be sos-convex. It is then tempting to test whether the forms produced by these algorithms are not sos. Such attempts are bound to fail for quaternary quartics (and conjecturally, for ternary sextics) in light of \cref{thm:convex_fourvar_quads_are_sos}.

% {\bf Proof Outline.} %  A study of the geometry of hyperplanes supporting the cone $\SOS44$ due to Blekherman reduces the problem of membership of a nonnegative form $p$ to the cone $\SOS44$ to checking t
 % two condition related to the so-called {\it Cayley-Bacharach} relations \cite{blekherman_nonnegative_2012}.
 % % This reduction follows from a study of the geometry of hyperplanes supporting the cone $\SOS44$, and is due to Blekherman \cite{blekherman_nonnegative_2012}.
 % While these two conditions involve evaluating the form $p$ at eight different points, they can be shown to follow from these two inequalities, that are valid for any convex $n\text{-variate}$ quartic form $p$:
 Our proof strategy relies instead on an equivalence due to Blekherman \cite{blekherman_nonnegative_2012} between the membership $p \in \SOS44$ for a nonnegative  form $p$, and the following bounds on point evaluations of the form $p$:
 \begin{equation}
 \sqrt{p(\iu_1)} \le \sum_{i=2}^8  \sqrt{p(\iu_i)} \text{ and } \sqrt 2 \sqrt{|p(\iz)| + \RE{p(\iz)}} \le \sum_{i=3}^8 \sqrt{p(\iv_i)},\label{eq:bounds_cayley_bacharach} 
\end{equation}
 where the real vectors $\iv_i$ and $\iu_i$ are the complex vector $\iz$ come from intersections of quadratic forms (see \cref{thm:caracterization-sos} for a more precise statement). This equivalence is explained in \cref{sec:sos_vs_psd}.
% The problem of testing membership of a nonnegative form to the cone $\SOS44$ can be reduced to checking two requirements related to the so-called {\it Cayley-Bacharach} relations. These requirements are presented in \cref{sec:sos_vs_psd}. The reduction follows from a study of the geometry of the supporting hyperplanes the cone $\SOS44$, and is due to Blekherman \cite{blekherman_nonnegative_2012}.
 We show in \cref{sec:proof_main_theorem} that any quaternary quartic form $p$ that satisfies the two inequalities
\begin{equation}\label{eq:cs_quartics}
    Q_p(\ix, \iy) \le \sqrt{p(\ix) p(\iy)} \quad \forall \; \ix, \iy \in \R^4
    \quad \text{ and }\quad
    |p(\iz)| \le Q_p(\iz, \bar \iz) \quad    \forall \; \iz \in \C^4,
  \end{equation}
  where $Q_p(\ix, \iy) \coloneqq \frac1{12} \iy^T\nabla^2p(\ix)\iy$, also satisfies these bounds. %. in \cref{eq:bounds_cayley_bacharach}.
  These inequalities can be thought of as a generalization of the Cauchy-Schwarz inequality, valid for any $n \times n$ positive semidefinite matrix $Q$:
  \[\ix^TQ\iy \le \sqrt{\ix^TQ\ix \cdot \iy^TQ\iy} \quad \forall \; \ix, \iy \in \R^n.\]
  We show that convex quaternary quartic forms satisfy the inequalities in \cref{eq:cs_quartics}, and are therefore sos. 
  In fact, in \cref{sec:generalized_cauchy_schwarz}, we present generalizations of the Cauchy-Schwarz inequality that apply to convex forms of any degree and any number of variables. We believe that these inequalities could be of independent interest. In \cref{sec:generalization}, we discuss a possible extension of our proof technique to the case of ternary sextics.

\section{Background and Notation}
%%%%%%%%%%%%%%%%%%%%%%%%%%%%%%%%
% template: https://arxiv.org/pdf/0901.3706.pdf
%%%%%%%%%%%%%%%%%%%%%%%%5

%The set of multivariate homogeneous polynomial (also called forms) of degree $d$ in the variables $\ix = (x_1, \ldots, x_n)$ is denoted by $\FORMS{n}{d}$.
%
%The cones of nonnegative, sos and convex forms of degree $d$ in the variables $\ix$ are denoted $\PSD{n}{d}, \SOS{n}{d}$ and $\CONVEX{n}{d}$ respectively.

We denote the set of positive natural numbers, real numbers, and
complex numbers by $\mathbb N$, $\R$, and $\C$ respectively. We denote
by $(\ie1, \ldots, \ie n)$ the canonical basis of $\R^n$. We denote by
$i$ the imaginary number $\sqrt{-1}$, and by $\bar z$, $|z|$,
$\RE{z}$, and $\IM{z}$ the complex conjugate, the modulus, the real part
and the imaginary part of a complex number $z$ respectively.

\subsection{Notation for differential operators}
We denote by $\partial_{\iu}$ the partial differentiation operator in
the direction of $\iu \in \mathbb \C^n$, i.e., $\partial_\iu p(\ix)$
is the limit of the ratio $(p(\ix + t\iu) - p(\ix)) / t$ as
$t \rightarrow 0$ for all $n\text{-variate}$ polynomial functions $p$
and all vectors $\ix \in \C^n$.  The gradient operator
$(\partial_{\ie1}, \dots, \partial_{\ie n})^T$ is denoted by $\nabla$,
the Hessian operator $\nabla\nabla^T$ is denoted by $\nabla^2$, and the
Laplacian operator $\partial_{\ie 1}^2 + \dots + \partial_{\ie n}^2$
is denoted by $\Delta$. For a form $p \in \FORMS{n}{2d}$ and vectors
$\ix_1, \dots, \ix_n \in \C^n$, we denote by
$p(\partial_{\ix_1}, \dots, \partial_{\ix_n})$ the differential
operator obtained by replacing the indeterminate $x_k$ with
$\partial_{\ix_k}$ for $k=1,\dots,n$ in the expression
$p(x_1, \dots, x_n)$. We note that taking $k$ partial derivatives of a
$k\text{-degree}$ form results in a constant function. As a
consequence, we consider the quantity
$p(\partial_{\ix_1}, \dots, \partial_{\ix_n})q$ to be a scalar for all
forms $p$ and $q$ in $\FORMS{n}{k}$.

\subsection{Euler's identity}
Euler's identity (see e.g., \cite{opt_hom_lasserre_2002}) links the value that a form $p \in \FORMS{n}{k}$ takes to its gradient as follows :
\[k \; p(\ix) =  \ix^T \nabla p(\ix)  \quad \forall \ix \in \R^n.\]
By applying this identity to the entries of the gradient $\nabla p$, one obtains the following relationship between a form and its Hessian:
\[k(k-1) \;  p(\ix) = \ix^T \nabla^2 p(\ix) \ix \quad \forall \ix \in \R^n.\]
It is readily seen from this identity is that every convex form is nonnegative; i.e., for every $d \in \N$, $\CONVEX{n}{2d} \subseteq \PSD{n}{2d}.$

\subsection{Tensors and outer product}\label{sec:tensors}
  A tensor of order $k$ is a multilinear form $T: (\mathbb R^n)^k \rightarrow \mathbb R$. The tensor $T$ is called symmetric if $T(\ix_1, \dots, \ix_k) = T(\ix_{i_1}, \dots, \ix_{i_k})$ for every $\ix_1, \dots, \ix_k \in \mathbb R^n$ and every permutation $(i_1,\dots, i_k)$ of the set $\{1, \dots, k\}$.
  The outer product of two vectors $\ix$ and $\iy$ is denoted by $\ix\otimes \iy$. The symmetric outer product $\frac12 (\ix \otimes \iy +\iy \otimes \ix)$ of two vectors $\ix$ and $\iy$ is denoted by $\ix \cdot \iy$. The (symmetric) outer product of a vector $\ix$ with itself $k$ times is denoted by $\ix^{k}$.
For any tensor $T$ of order $d$, the quantity $T(\ix_1, \dots, \ix_k)$ is a linear function of the outer product $\ix_1\otimes \dots \otimes \ix_k$ of the vectors $\ix_1, \dots, \ix_k$. If
the tensor $T$ is assumed to be symmetric, then this quantity only
depends on the symmetric outer product $\ix_1 \dots  \ix_k$.

%By a slight abuse of notation, if a tensor $T$ is equal to the product of $d$ linear forms $\langle \ix_1, \cdot \rangle, \dots, \langle \ix_d, \cdot \rangle$, then we write $T = \ix_1 \dots \ix_d$.

%\item  A symmetric tensor of order $d$ is a multilinear form $T:~\mathbb R^n~\times~\dots~\times~\mathbb R^n~\rightarrow~\mathbb R$ such that
%$T(\ix_1, \dots, \ix_d) = T(\ix_{i_1}, \dots, \ix_{i_d})$ for every $\ix_1, \dots, \ix_d \in \mathbb R^n$ and every permutation $(i_1, \dots, i_d)$ of the set $\{1, \dots, d\}$.
\subsection{Forms and symmetric tensors} 
  For every form $p \in \FORMS{n}{k}$, there exists a unique symmetric tensor $T_p$ of order $k$ such that
\[p(\ix) = T_p(\underbrace{\ix, \dots, \ix}_{k \text{ times}}) \quad \forall \ix \in \mathbb R^n.\]
This is known as the {\it polarization} identity \cite{ehrenborg_apolarity_1993}. The tensor $T_p$ is related to the derivatives of the form $p$ via the relation%\footnote{Note that taking $d$ derivatives of a polynomial of degree $d$ is constant, and therefore Equation \cref{eq:polarization} makes sense.}
\begin{equation}
% d! \; T_p(\ix_1, \dots, \ix_d) = \frac{\partial}{\partial
%   t_1}\dots\frac{\partial}{\partial t_d} \left(p(t_1\ix_1 + \dots+
%   t_d\ix_d)\right),%\label{eq:polarization}
k! \; T_p(\ix_1, \dots, \ix_k) = \partial_{\ix_1}\dots\partial_{\ix_k}p \quad \forall \ix_1, \dots, \ix_k \in \mathbb R^n,\label{eq:polarization}
\end{equation}
%where $t_1, \dots, t_k$ are dummy (real) variables, or equivalently,
% \begin{equation*}
%   d! \; T_p(\ix_1, \dots, \ix_d) = \langle \nabla^{d}p, \ix_1 \otimes \dots \otimes \ix_d\rangle \coloneqq \sum_{1 \le k_1, \dots, k_d \le d} \frac{\partial^d p}{\partial x_{k_1} \dots \partial x_{k_d}} \ix_{1,k_1}\dots\ix_{d,k_d},
% \end{equation*}
% where the sum on the right is over all permutation $(k_1, \dots, k_n)$ of the set $\{1, \dots, d\}$
and is related to the coefficients of the form $p$ via the identity
\begin{equation}
p_{i_1, \dots, i_n} = {k \choose i_1, \dots, i_k} \; T_p(\underbrace{\ie1, \dots, \ie1}_{\text{$i_1$ times}}, \dots, \underbrace{\ie n, \dots, \ie n}_{\text{$i_n$ times}}),%\label{eq:coeff_poly_tensor}
\end{equation}
where $p_{i_1, \dots, i_n}$ is the coefficient multiplying the
monomial $x_1^{i_1} \dots x_n^{i_n}$ in $p$.% and $(\ie1, \dots,$ $\ie n)$
% denotes the canonical basis of

When $k \eqqcolon 2d$ is even, we define the polynomial
  \begin{equation}
    Q_p(\ix, \iy) \coloneqq T_p(\underbrace{\ix, \dots, \ix}_{d \text{ times}}, \underbrace{\iy, \dots, \iy}_{d \text{ times}}) \quad \forall \ix,\iy \in \mathbb R^n.\label{eq:biform}
  \end{equation}
  We call the polynomial $Q_p$ the {\it biform} associated to $p$. We note that $Q_p$ is a form of degree $2d$ in the $2n$ variables $(\ix, \iy)$ that is homogeneous of degree $d$ in $\ix$ (resp. $\iy$) when $\iy$ (resp. $\ix$) is fixed.
  
\subsection{Inner product on $\FORMS{n}{2d}$\label{sec:inner_product}}
We equip the vector space $\FORMS{n}{2d}$ with the the following inner product
  \[\langle p, q \rangle \coloneqq p(\partial_{\ie 1}, \dots,
    \partial_{\ie n}) q \quad \forall p, q \in \FORMS{n}{2d},\]
  the so-called {\it Fischer} inner product
  \cite{diff_innner_product_fischer_1918}. This inner product can also be expressed in a more symmetric way in terms of the coefficients of the forms $p$ and $q$ as follows
  \[\langle p, q \rangle = (2d)! \sum_{i_1 + \dots + i_n = 2d} {2d \choose i_1, \dots, i_n}^{-1} p_{i_1, \dots, i_n} \; q_{i_1, \dots, i_n} \quad \forall p, q \in \FORMS{n}{2d}.\]
  %where $p_{i_1, \dots, i_n}$ denotes the coefficient of the monomial $x_1^{i_1}\dots x_n^{i_n}$ in the polynomial $p$.
  By Riesz representation theorem, for every linear form $\ell: \FORMS{n}{2d} \rightarrow \R$, there exists a unique form $p \in \FORMS{n}{2d}$ satisfying
  \[\ell(q) = p(\partial_{\ie 1}, \dots, \partial_{\ie n})q \quad \forall q \in \FORMS{n}{2d},\]
  and we write $\ell = p(\partial_{\ie 1}, \dots, \partial_{\ie n})$.

  A particularly important special case of linear forms is given by tensor evaluations. The linear form given by $p \mapsto T_p(\ix_1, \dots, \ix_{2d})$ for some fixed vectors $\ix_1, \dots, \ix_{2d}$ is identified with the polynomial $\frac 1{(2d)!} \partial_{\ix_1} \dots \partial_{\ix_{2d}} $. For instance,
  \begin{itemize}
  \item The point evaluation map at $\ix \in \C^n$ given by $p \mapsto p(\ix)$ is equal to the differential operator $\frac1{(2d)!}\partial_\ix^{2d}$.
  \item The map
    $p \mapsto Q_p(\ix, \iy)$ is equal to the differential operator $\frac1{(2d)!} \partial_{\ix}^d \partial_{\iy}^d$ for all vectors $\ix$ and $\iy$ in $\C^n$.
  \item The map $p \mapsto Q_p(\iz, \bar \iz)$ is equal to the differential operator $\frac{1}{(2d)!} (\partial_{\ix}^2 + \partial_{\iy}^2)^d$ for any vector $\iz$ in $\C^n$ whose real and imaginary parts are given by $\ix$ and $\iy$. This follows from the fact that $\partial_{\iz} = \partial_{\ix} + i \partial_{\iy}$ and $\partial_{\bar \iz} = \partial_{\ix} - i \partial_{\iy}$.
  \end{itemize}

  %This pairing can be naturally extended to vectors $\iz = \ix + i \iy$ with complex coefficients as the map $p \mapsto Q_p(\iz, \bar \iz)$ is equal to $\frac1{(2d)!} (\partial_{\ix}^2 + \partial_{\iy}^2)^d$.
  
\subsection{Convex duality}
  We denote the dual of a convex cone $\Omega \subseteq \FORMS{n}{2d}$ by 
\[\Omega^* \coloneqq \{ \ell: \FORMS{2}{2d} \rightarrow \mathbb R \; | \; \ell \text{ is linear and } \ell(p) \ge 0 \; \forall p \in \Omega\}.\]
Recall that
$\CONVEX{n}{2d}^* = \cone\{ \ell_{\ix, \iy} \; | \; \ix, \iy \in
\R^n\},$ where
$\ell_{\ix, \iy}(p) \coloneqq \iy^T \nabla^2p(\ix)\iy$ and
$\cone(S)$ denotes the conic hull of a set $S$
\cite{reznick_blenders_2011}.  By using the pairing between linear
forms acting on the vector space $\FORMS{n}{2d}$ and elements of this
vector space described in \cref{sec:inner_product}, we can write
$\CONVEX{n}{2d}^* = \cone \{ (\partial_\iy)^2 (\partial_\ix)^{2d-2} \;
| \; \ix, \iy \in \R^n \}$.  For example, when $n=2$, if we denote
$\partial_{\ie 1}$ and $\partial_{\ie 2}$ by $\partial_x$ and
$\partial_y$ respectively, then
\begin{equation}\label{eq:dual_bivariate_convex}
\CONVEX{2}{2d}^* = \left\{ \sum_{k=1}^N (\alpha_k \partial_x+\beta_k \partial_y)^2 (\gamma_k \partial_x+\delta_k \partial_y)^{2d-2} \; | \; N \in \N \text{ and } \alpha_k,\beta_k,\gamma_k,\delta_k \in \R\right\}.
\end{equation}

\section{Generalized Cauchy-Schwarz Inequalities for Convex Forms}\label{sec:generalized_cauchy_schwarz}

The Cauchy-Schwarz inequality states that for any $n \times n$ positive semidefinite matrix $Q$,
\[\ix^TQ\iy \le \sqrt{\ix^TQ\ix  \cdot \iy^TQ\iy} \quad \forall \ix, \iy \in \mathbb R^n.\]
When the vectors $\ix$ and $\iy$ are complex and conjugate of each other, i.e. when $\ix = \bar \iy \eqqcolon \iz$, the inequality reverses as follows:
\[\sqrt{\iz^TQ\iz  \cdot \bar \iz^TQ \bar \iz}  \le \iz^TQ\bar\iz \quad \forall \iz \in \mathbb C^n.\]
This inequality is well-defined since the quantity appearing on the left-hand side is a nonnegative number as $\iz^TQ\iz \cdot \bar \iz^TQ \bar \iz = |\iz^TQ\iz|^2$, and the complex number on the right-hand side is a real number because it is equal to its conjugate.

The condition that the matrix $Q$ is positive semidefnite can be restated equivalently in terms of convexity of the quadratic form $p(\ix) \coloneqq \ix^TQ\ix$. %
In the following theorem, we present a generalization of these inequalities for convex forms of higher degree.

% \begin{theorem}[First generalized Cauchy-Schwarz inequality]%\label{thm:real_generalized_cauchy_schwarz}
%   For any positive integer $d$, there exists a constant $A_d$ such that for any positive integer $n$, for any form $p \in \CONVEX{2d}{n}$,
% \begin{equation}%\label{eq:real_generalized_cauchy_schwarz}
%   Q_p(\ix, \iy) \le A_d \;\sqrt{p(\ix) p(\iy)} \quad \forall \ix, \iy \in \mathbb R^n,
% \end{equation}
% where $Q_p$ is the biform associated with $p$ and defined in \cref{eq:biform}.
% %The smallest constant $A_d$ that makes Inequality~\cref{eq:real_generalized_cauchy_schwarz} work is a solution to a (small) semidefinite program.
% %   Furthermore, we have $A_1 = A_2 = A_3 = 1$, and $A_{2k} > 1$ for all integers $k \ge 2$. 
% \end{theorem}

% \begin{theorem}[Second generalized Cauchy-Schwarz inequality]%\label{thm:complex_generalized_cauchy_schwarz}
%   For any positive integer $d$, for any form $p \in \CONVEX{2d}{n}$,  
% \begin{equation}%\label{eq:complex_generalized_cauchy_schwarz}
%   |p(\iz)| \le B_d \; Q_p(\iz, \bar \iz) \quad \forall \iz \in \mathbb C^n,
% \end{equation}
% where $Q_p$ is the biform associated with $p$ and defined in \cref{eq:biform}, and $B_d = \frac{{2(d-1) \choose d-1}}{d}.$
% \end{theorem}

\begin{theorem}[Generalized Cauchy-Schwarz inequalities (GCS)]\label{thm:generalized_cauchy_schwarz}
  % For any positive integers $n$ and $d$, for any form $p \in \CONVEX{2d}{n}$,
  For any convex form $p$ in $n$ variables and of degree $2d$, we have
\begin{equation}\label{eq:real_generalized_cauchy_schwarz}
  Q_p(\ix, \iy) \le A_d \;\sqrt{p(\ix) p(\iy)} \quad \forall \ix, \iy \in \mathbb R^n,
\end{equation}
and
\begin{equation}\label{eq:complex_generalized_cauchy_schwarz}
  |p(\iz)| \le B_d \; Q_p(\iz, \bar \iz) \quad \forall \iz \in \mathbb C^n,
\end{equation}
where $Q_p$ is the biform associated with $p$ and defined in \cref{eq:biform}, and $A_d$ and $B_d$ are universal constants depending only on the degree $2d$.
%The smallest constant $A_d$ that makes Inequality~\cref{eq:real_generalized_cauchy_schwarz} work is a solution to a (small) semidefinite program.
%   Furthermore, we have $A_1 = A_2 = A_3 = 1$, and $A_{2k} > 1$ for all integers $k \ge 2$. 
\end{theorem}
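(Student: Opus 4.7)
My plan is to prove the two inequalities separately, each time expressing the relevant tensor contractions as linear combinations of values of $p$ along real lines and then using convexity and homogeneity to control those values. The base case $d=1$ of both inequalities is ordinary Cauchy-Schwarz, so I focus on $d\ge 2$.

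For the first inequality, consider the univariate polynomial
\[\phi(t) := p(\ix + t\iy) = \sum_{k=0}^{2d}\binom{2d}{k}t^k T_p(\ix^{2d-k},\iy^k),\]
which has degree $2d$ in $t$ and whose coefficient of $t^d$ equals $\binom{2d}{d}Q_p(\ix,\iy)$. Lagrange interpolation at any $2d+1$ distinct real nodes $t_0,\ldots,t_{2d}$ gives universal constants $c_j$ with $\binom{2d}{d}Q_p(\ix,\iy) = \sum_j c_j\,p(\ix+t_j\iy)$. Next, convexity together with homogeneity of $p$ (and $p(-\iy)=p(\iy)$) yields the pointwise bound $p(\ix+t\iy)\le (1+|t|)^{2d-1}(p(\ix)+|t|\,p(\iy))$ for every $t\in\R$: write $\ix+t\iy=(1+|t|)\bigl(\tfrac{1}{1+|t|}\ix+\tfrac{|t|}{1+|t|}\mathrm{sgn}(t)\iy\bigr)$ and apply Jensen. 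Combining these two ingredients yields $Q_p(\ix,\iy)\le C_d\bigl(p(\ix)+p(\iy)\bigr)$. Finally, the substitution $(\ix,\iy)\mapsto(\lambda\ix,\lambda^{-1}\iy)$ preserves $Q_p(\ix,\iy)$ while turning the right-hand side into $\lambda^{2d}p(\ix)+\lambda^{-2d}p(\iy)$, and minimizing over $\lambda>0$ produces $Q_p(\ix,\iy)\le 2C_d\sqrt{p(\ix)\,p(\iy)}$, establishing the first inequality with $A_d=2C_d$.

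For the second inequality, decompose $\iz=\ix+i\iy$ and set $f_k:=T_p(\ix^{2d-k},\iy^k)$. The binomial expansion then gives
\[p(\iz)=\sum_{\ell=0}^d(-1)^\ell\binom{2d}{2\ell}f_{2\ell}+i\sum_{\ell=0}^{d-1}(-1)^\ell\binom{2d}{2\ell+1}f_{2\ell+1}, \quad Q_p(\iz,\bar\iz)=\sum_{\ell=0}^d\binom{d}{\ell}f_{2\ell}.\]
I would first show $Q_p(\iz,\bar\iz)\ge 0$ for convex $p$ via the operator factorization $Q_p(\iz,\bar\iz)=\tfrac{1}{(2d)!}(\partial_\iz\partial_{\bar\iz})^d p$ together with the positivity $\partial_\iz\partial_{\bar\iz}\,p=\partial_\ix^2 p+\partial_\iy^2 p\ge 0$ coming from convexity. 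Then I would bound each $|f_k|$ by a Markov-type coefficient estimate applied to the polynomial $\phi(t)=p(\ix+t\iy)$ of Part~1: convexity controls $\|\phi\|_{\infty,[-1,1]}\le 2^{2d-1}(p(\ix)+p(\iy))$, so Markov's inequality gives $|f_k|\le K_{d,k}\bigl(p(\ix)+p(\iy)\bigr)$ for each $k$. Summing yields $|p(\iz)|\le F_d\bigl(p(\ix)+p(\iy)\bigr)$; pairing this with a lower bound of the form $Q_p(\iz,\bar\iz)\ge c_d\bigl(p(\ix)+p(\iy)\bigr)$ then produces the second inequality with $B_d=F_d/c_d$.

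The main obstacle is establishing the lower bound $Q_p(\iz,\bar\iz)\ge c_d\bigl(p(\ix)+p(\iy)\bigr)$ for $d\ge 4$. For $d\le 3$ every summand $f_{2\ell}$ appearing in $Q_p(\iz,\bar\iz)$ is individually nonneg by convexity evaluated at $\ix$ or at $\iy$, so $c_d=1$ works trivially; but for $d\ge 4$ the interior summands $T_p(\ix^{2d-2\ell},\iy^{2\ell})$ with $2\le\ell\le d-2$ involve fourth- and higher-order directional derivatives of $p$ and may fail to be individually nonneg, even though the positively-weighted sum $Q_p(\iz,\bar\iz)$ remains nonneg via the operator argument. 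Extracting a quantitative positive lower bound will likely require a refined use of the factorization $Q_p(\iz,\bar\iz)=\tfrac{1}{(2d)!}(\partial_\ix^2+\partial_\iy^2)^d p$, perhaps via an integral representation that averages the Hessian-PSD inequality of convexity against a positive kernel supported on the $(\ix,\iy)$-plane, giving a universal but potentially suboptimal $c_d>0$.
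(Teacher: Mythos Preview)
Your argument for \eqref{eq:real_generalized_cauchy_schwarz} is correct and genuinely different from the paper's. Both routes pass through the linearized form $Q_p(\ix,\iy)\le C_d\bigl(p(\ix)+p(\iy)\bigr)$ and then optimize a scaling to reach the geometric-mean version; but where the paper restricts to the plane $\mathrm{span}(\ix,\iy)$ and invokes a pure compactness lemma on $\CONVEX{2}{2d}$ (the slice $\{q(\ie1)+q(\ie2)\le 1\}$ is compact, hence the linear functional $q\mapsto Q_q(\ie1,\ie2)$ is bounded on it), your Lagrange-interpolation identity together with the Jensen bound $p(\ix+t\iy)\le(1+|t|)^{2d-1}(p(\ix)+|t|\,p(\iy))$ is fully constructive and produces an explicit (suboptimal) $A_d$. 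The paper's argument is shorter but yields no constant at all.

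For \eqref{eq:complex_generalized_cauchy_schwarz} there are two real gaps. First, the operator step ``$(\partial_\ix^2+\partial_\iy^2)p\ge 0$, hence $(\partial_\ix^2+\partial_\iy^2)^dp\ge 0$'' does not iterate: one application of $\partial_\ix^2+\partial_\iy^2$ to a convex form gives a nonnegative form of degree $2d-2$, but that form need not be convex, so the next application need not stay nonnegative. Second, the crucial lower bound $Q_p(\iz,\bar\iz)\ge c_d\bigl(p(\ix)+p(\iy)\bigr)$ is only conjectured, not proved. Both gaps are closed by the integral representation you gesture at, which the paper actually establishes:
\[
Q_p(\iz,\bar\iz)=\frac{4^d(d+1)}{\pi}\binom{2d}{d}^{-1}\iint_{\alpha^2+\beta^2\le 1}p(\alpha\ix+\beta\iy)\,\mathrm{d}\alpha\,\mathrm{d}\beta.
\]
Nonnegativity is then immediate from $p\ge 0$. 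For the quantitative lower bound, symmetrize the integrand in $\beta\mapsto-\beta$ and use midpoint convexity, $p(\alpha\ix+\beta\iy)+p(\alpha\ix-\beta\iy)\ge 2p(\alpha\ix)=2\alpha^{2d}p(\ix)$, to obtain $Q_p(\iz,\bar\iz)\ge c_d\,p(\ix)$ with an explicit $c_d>0$; by symmetry the same holds with $p(\iy)$, and averaging gives what you need. The paper itself does not extract this estimate: it uses the integral formula only to check that $Q_q(\iz,\bar\iz)=0$ forces $q\equiv 0$ on $\CONVEX{2}{2d}$, and then reuses the same compactness lemma as for the first inequality.
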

The two remarks below give new interpretations of the GCS inequalities that do no involve the biform $Q_p$.
\begin{remark}%[Interpretation of the GCS inequalities in terms of mixed derivatives]
In view of the identification of differential operators with linear forms discussed in \cref{sec:inner_product}, the generalized Cauchy-Schwarz inequality in \cref{eq:real_generalized_cauchy_schwarz} can be written in terms of mixed derivatives as follows:
\[ \partial_\ix^d \partial_\iy^d p \le  A_d \;\sqrt{\partial_{\ix}^{2d}p \cdot \partial_{\iy}^{2d}p} \quad \forall \ix, \iy \in \R^n, \]
where $p$ is any convex form of degree $2d$.
    % \quad \text{ and } \quad |\partial_{\iz}^{2d}p| \le B_d \; (\partial_\ix^2+ \partial_\iy^2)^d \, p,\]
Similarly, the second Generalized Cauchy-Schwarz inequality in \cref{eq:complex_generalized_cauchy_schwarz} can be written as 
\[|\partial_{\iz}^{2d}p| \le B_d \; \partial_{\iz}^d \partial_{\bar \iz}^d \, p \quad \forall \iz \in \C^n,\]
for any convex form $p$ of degree $2d$. If we denote by $\ix$ and $\iy$ the real and imaginary parts of the vector $\iz$, the same inequality reads
\[|\partial_{\iz}^{2d}p| \le B_d \; (\partial_{\ix}^2 + \partial_{\iy}^2)^d \, p.\]
\end{remark}
\begin{remark}%[Interpretation of the GCS inequalities in terms of the integral]
  For all forms $p \in \FORMS{n}{2d}$, and for all complex vectors $\iz \in \mathbb C^n$ whose real and imaginary parts are given by $\ix$ and $\iy$, the quantity $Q_p(\iz, \bar \iz)$ is proportional to the average of the form $p$ on the ellipse
  \[\{\alpha \ix + \beta \iy \; | \; \alpha, \beta \in \R \text{ and } \alpha^2 + \beta^2 \le 1\}.\]
  More precisely, we  show in \cref{sec:Qp_integral} the identity
  \begin{equation}
Q_p(\iz, \bar \iz) = \frac{4^d(d+1)}\pi {2d \choose d}^{-1} \; \iint_{\alpha^2+\beta^2 \le 1} p(\alpha \ix + \beta \iy) \; {\rm d}\alpha {\rm d}\beta.\label{eq:Qp_integral}
\end{equation}
The generalized Cauchy-Schwarz inequality in \eqref{eq:complex_generalized_cauchy_schwarz} can thus be equivalently written as
\[|p(\ix + i\iy)| \le B_d' \; \iint_{\alpha^2+\beta^2 \le 1} p(\alpha \ix + \beta \iy) \; {\rm d}\alpha {\rm d}\beta \quad \forall \ix, \iy \in \mathbb R^n,\]
for any convex form $p \in \FORMS{n}{2d}$, where $B_d' \coloneqq  \frac{4^d(d+1)}{{2d \choose d}\pi} B_d$.
\end{remark}

We emphasize that the constants $A_d$ and $B_d$ appearing in the GCS inequalities depend only on the degree $2d$, and not on the number of variables $n$. Furthermore, for the purposes of this paper, we need to find the smallest constants that make these GCS inequalities hold for quartic and sextic forms, i.e., when $d=2$ and $d=3$. This motivates the following definitions for all $d \in \N$:
\begin{equation}
\begin{aligned}
  A_d^* &\coloneqq \inf_{A \ge 0} \; A \quad \text{ s.t. }  Q_p(\ix, \iy) \le A \sqrt{p(\ix) p(\iy)} & \forall n \in \N,\; \forall p \in \CONVEX{n}{2d}, \; \forall \ix, \iy \in \mathbb R^n,\\
  B_d^* &\coloneqq \inf_{B \ge 0} B \quad \text{ s.t. } |p(\iz)| \le B  Q_p(\iz, \bar \iz) & \forall n \in \N, \; \forall p \in \CONVEX{n}{2d}, \; \forall \iz \in \mathbb C^n.
\end{aligned}
\label{eq:Ad_Bd_star}
\end{equation}

It should be clear that the ``$\inf$'' in these definitions is actually a ``$\min$'' since the inequality symbol ``$\le$'' appearing in the GCS inequalities is not strict. Moreover, the constants $A_d^*$ and $B_d^*$ are bounded below by 1 for all $d \in \N$. This is easily seen by, e.g., taking $n=1$, $\ix = \iy = \iz = 1$, and considering the (univariate) convex form $p(x) \coloneqq x^{2d}$. 

\cref{thm:generalized_cauchy_schwarz} is equivalent to the statement that the constants $A_d*$ and $B_d^*$ are finite for all positive integers $d$. The following theorem strengthens this claim.
\begin{theorem}[Optimal constants in the GCS inequalities]\label{thm:optimal_cs_constants}
For all positive integers $d$,
\[B_d^* = \frac{{2(d-1) \choose d-1}}{d}.\]
Moreover, $A_1^* = A_2^* = A_3^* = 1$,  $A_4^*$ is an algebraic number of degree 3, and for all even integers $d \ge 4$, $A_d^* > 1$. 
More generally, for every positive integer $d$, the constant $A_d^*$ is the optimal value of an (explicit) semidefinite program.
%\footnote{By ``small semidefinite program'' we mean a semidefinite program that only involve matrices whose size grow linearly with the parameter $d$.}
\end{theorem}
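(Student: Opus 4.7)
My plan is to first reduce each of the two optimization problems in \eqref{eq:Ad_Bd_star} to the bivariate case. If $V = \operatorname{span}_{\mathbb R}(\ix,\iy)$ (respectively $V = \operatorname{span}_{\mathbb R}(\operatorname{Re}\iz,\operatorname{Im}\iz)$), then every quantity entering the GCS inequalities depends on $p$ only through the tensor $T_p$ restricted to $V_{\mathbb C}^{\otimes 2d}$, and the restriction $p|_V$ is convex whenever $p$ is. Hence I may take $n=2$, and after an invertible change of bivariate coordinates (which rescales both sides of each inequality by the same factor) fix $\ix = \ie1,\;\iy = \ie2,\;\iz = \ie1+i\ie2$. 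Throughout, I will use the explicit dual description $\CONVEX{2}{2d}^* = \cone\{(\alpha\partial_x+\beta\partial_y)^2(\gamma\partial_x+\delta\partial_y)^{2d-2}\}$ from \eqref{eq:dual_bivariate_convex} together with the Fischer inner product identification from \cref{sec:inner_product}.

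For $B_d^*$, the Fischer identification sends $p\mapsto Q_p(\iz,\bar\iz)$ to the polynomial $(x^2+y^2)^d/(2d)!$ and $p\mapsto\operatorname{Re}(e^{-i\phi}p(\iz))$ to $\operatorname{Re}(e^{-i\phi}(x+iy)^{2d})/(2d)!$. Thus $|p(\iz)|\le B\,Q_p(\iz,\bar\iz)$ holds on $\CONVEX{2}{2d}$ for every $\phi$ if and only if the polynomial $R_\phi := B(x^2+y^2)^d-\operatorname{Re}(e^{-i\phi}(x+iy)^{2d})$ lies in $\CONVEX{2}{2d}^*$. A planar rotation $x+iy\mapsto e^{-i\phi/(2d)}(x+iy)$ absorbs $\phi$, so $B_d^*$ equals the smallest $B$ for which the single polynomial $R_0 = B(x^2+y^2)^d-\operatorname{Re}((x+iy)^{2d})$ admits a conic decomposition into terms of the form $\ell^2 m^{2d-2}$ with $\ell,m$ linear. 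To establish $B_d^* = \binom{2(d-1)}{d-1}/d$, I will on the one hand construct an explicit extremal convex form $p^\star$ whose Hessian degenerates along a carefully chosen Chebyshev-like family of radial directions and whose ratio $|p^\star(\iz)|/Q_{p^\star}(\iz,\bar\iz)$ attains the bound, and on the other hand exhibit a matching dual witness writing $R_0$ at this sharp $B$ as a Catalan-weighted integral over the circle of $\ell^2 m^{2d-2}$ terms.

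For $A_d^*$, a diagonal rescaling $(x,y)\mapsto(\alpha x,\beta y)$ preserves bivariate convexity and lets me normalize $p(\ie1)=p(\ie2)=1$, reducing the problem to
\[
A_d^* = \max\bigl\{\,T_p(\ie1^d\,\ie2^d)\,:\,p\in\CONVEX{2}{2d},\;p(\ie1)=p(\ie2)=1\,\bigr\}.
\]
The convexity constraint is SDP-representable because in two variables every convex form is sos-convex (\cite{ahmadi2013complete}), i.e., the Hessian matrix admits an explicit polynomial-matrix sum-of-squares factorization captured by a finite-dimensional linear matrix inequality in the coefficients of $p$. For $d\in\{1,2,3\}$ I verify $A_d^*=1$ by producing, via this SDP, an SOS certificate for $p(\ix)p(\iy)-Q_p(\ix,\iy)^2$ as a polynomial in $(\ix,\iy)$ and the coefficients of $p$; for $d=1$ this is the classical Cauchy--Schwarz inequality applied to the PSD Hessian. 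For $d=4$, the KKT conditions at the SDP optimum, combined with the rank structure of the dual polynomial-matrix multipliers, reduce to a cubic polynomial equation in $A$, displaying $A_4^*$ as algebraic of degree $3$. For every even $d\ge 4$, I exhibit an explicit feasible $p_d$ with ratio strictly above $1$ by suitably lifting the $d=4$ extremal through multiplication by $(x^2+y^2)^{d-4}$ after a convexity-preserving correction, verifying strict inequality by direct coefficient computation.

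The hardest part is expected to be the dual decomposition underlying the $B_d^*$ value. While the reduction to the single polynomial $R_0$ and the SDP framework are routine, actually producing the Catalan-weighted representation of $R_0$ in $\cone\{\ell^2 m^{2d-2}\}$ at the sharp constant requires pinpointing the exact family of $d-1$ extremal directions on the circle, which I plan to carry out via the moment theory of Szeg\H{o} orthogonal polynomials on the unit circle. The explicit cubic arising for $A_4^*$, and the verification of strict inequality $A_d^*>1$ for even $d\ge 4$, will similarly require a careful case analysis of the SDP's active constraints at the extremum.
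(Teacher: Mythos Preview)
Your overall framework---bivariate reduction, Fischer-dual reformulation via \eqref{eq:dual_bivariate_convex}, and SDP representability through sos-convexity---matches the paper's. The divergences are in execution, and two of them are real gaps.

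For $B_d^*$ the paper is much more direct than your Szeg\H{o}/moment-theory plan. The dual certificate at the sharp constant is the explicit finite sum
\[
\frac{\binom{2(d-1)}{d-1}}{d}(x^2+y^2)^d - \operatorname{Re}\bigl((x+iy)^{2d}\bigr)
= \frac{4^d}{2d}\sum_{k=0}^{d-1}(-s_k x + c_k y)^2(c_k x + s_k y)^{2d-2},
\]
with $c_k=\cos(k\pi/2d)$, $s_k=\sin(k\pi/2d)$, checked by a short trigonometric computation in polar coordinates. The matching extremal form is equally explicit: $q_d(x,y)=\operatorname{Re}((x+iy)^{2d})+(2d-1)(x^2+y^2)^d$, whose Hessian in polar coordinates is a $2\times 2$ matrix with manifestly nonnegative trace and determinant. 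No orthogonal-polynomial theory is needed.

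For $A_d^*=1$ when $d\le 3$, your plan to produce an SOS certificate for $p(\ix)p(\iy)-Q_p(\ix,\iy)^2$ \emph{as a polynomial in the coefficients of $p$} cannot work as stated: that expression is not globally nonnegative (take any non-convex $p$), so any certificate would have to consume the convexity hypothesis, turning the task into a Positivstellensatz over a spectrahedron rather than a free SOS problem. The paper avoids this entirely. After your own linearization step, the claim $A_d^*\le 1$ is precisely the dual-cone membership $\partial_x^{2d}+\partial_y^{2d}-2\partial_x^d\partial_y^d\in\CONVEX{2}{2d}^*$, witnessed for $d=1,2,3$ by the one-line identities $(\partial_x-\partial_y)^2$, $(\partial_x-\partial_y)^2(\partial_x+\partial_y)^2$, and $\tfrac12(\partial_x-\partial_y)^2\bigl(\partial_x^4+\partial_y^4+(\partial_x+\partial_y)^4\bigr)$.

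For even $d\ge 4$, your lifting $(x^2+y^2)^{d-4}p_4$ plus ``convexity-preserving correction'' is speculative on both counts: multiplication by $(x^2+y^2)^{d-4}$ need not preserve convexity, and there is no a priori reason the ratio $T_p(\ie1^d\ie2^d)$ survives the product and the unspecified correction. The paper builds $p_d$ from scratch as $s+\alpha_d q$ with $s=\tfrac12\bigl((x+y)^{2d}+(x-y)^{2d}\bigr)$ and $q=\sum_{k=1}^{d-1}x^{2k}y^{2d-2k}$. Here $s(\ie1)=s(\ie2)=Q_s(\ie1,\ie2)=1$ for even $d$, $\nabla^2 s\succeq 0$ with degeneracy only on the lines $y=\pm x$, and a direct $2\times2$ computation shows $\nabla^2 q$ is positive definite on those lines exactly when $d\ge 4$; a sufficiently small $\alpha_d>0$ then makes $p_d$ convex while $Q_{p_d}(\ie1,\ie2)=1+\alpha_d/\binom{2d}{d}>1$.
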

\begin{remark}
   The quantity $\frac{{2(d-1) \choose d-1}}{d}$ is known as the {\it $d^{\text{th}}$ Catalan number} \cite{catalan_hilton_1991}.
 \end{remark}

 The proofs of \cref{thm:generalized_cauchy_schwarz} and \cref{thm:optimal_cs_constants} are given in \cref{sec:proof_GCS} and \cref{sec:valueof_Ad_Bd} respectively.
 
 % \subsection{Reducing The Generalized Cauchy-Schwarz Inequalities to Linear Inequalities on Bivariate Convex Forms}
 \subsection{Proof of the generalized Cauchy-Schwarz inequalities in \cref{thm:generalized_cauchy_schwarz}}
 \label{sec:proof_GCS}
 In this section, we will show that the GCS inequalities are, at heart, linear inequalities about bivariate convex forms. This observation will eventually lead to a simple proof of \cref{thm:generalized_cauchy_schwarz}.
 % In this section, we perform a series of simplifications of the generalized Cauchy-Schwarz inequalities that will eventually lead to a simple proof of \cref{thm:generalized_cauchy_schwarz}.

 The next lemma leverages the homogeneity properties of the elements of $\FORMS{n}{2d}$ to linearize inequalities \cref{eq:real_generalized_cauchy_schwarz,eq:complex_generalized_cauchy_schwarz}.

 \begin{lemma}
   For all $n, d \in \N$, for any positive constants $A_d$ and $B_d$, and for any nonnegative form $p \in \PSD{n}{2d}$,\\
   (i) the form $p$ satisfies the inequality in \cref{eq:real_generalized_cauchy_schwarz} with constant $A_d$ if and only if
   \begin{equation}\label{eq:linear_real_generalized_cauchy_schwarz}
     2Q_p(\ix, \iy) \le A_d  \left(p(\ix) + p(\iy)\right) \quad \forall \ix, \iy \in \mathbb R^n,
   \end{equation}
   (ii) the form $p \in \PSD{n}{2d}$ satisfies the inequality in \cref{eq:complex_generalized_cauchy_schwarz} with constant $B_d$ if and only if
   \begin{equation}\label{eq:linear_complex_generalized_cauchy_schwarz}
     \RE{p(\iz)} \le B_d  Q_p(\iz, \bar \iz) \quad \forall \iz \in \mathbb C^n.
   \end{equation}
 \end{lemma}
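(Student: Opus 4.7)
The plan is to prove each direction of the two equivalences separately, using homogeneity and simple convexity/rotation tricks.

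For part (i), the forward direction ``GCS implies linear'' is immediate from the AM-GM inequality: if $Q_p(\ix,\iy) \le A_d \sqrt{p(\ix)p(\iy)}$, then
\[2 Q_p(\ix, \iy) \le 2 A_d \sqrt{p(\ix) p(\iy)} \le A_d(p(\ix) + p(\iy)),\]
where in the last step I use $2\sqrt{ab} \le a+b$ for nonnegative reals $a,b$ (which applies since $p$ is nonnegative). For the reverse direction, I would exploit the fact that $Q_p$ is homogeneous of degree $d$ in each of $\ix$ and $\iy$. Given the linear inequality, substituting $(t\ix, s\iy)$ for $(\ix, \iy)$ with $t, s > 0$ yields
\[2 t^d s^d Q_p(\ix, \iy) \le A_d \bigl(t^{2d} p(\ix) + s^{2d} p(\iy)\bigr).\]
When $p(\ix)$ and $p(\iy)$ are both strictly positive, I would choose $t, s$ so that $t^{2d} p(\ix) = s^{2d} p(\iy)$, which after dividing by $t^d s^d$ gives exactly $Q_p(\ix, \iy) \le A_d \sqrt{p(\ix) p(\iy)}$. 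The boundary cases (e.g.\ $p(\ix)=0$, which forces $Q_p(\ix, \iy)\le 0$ by letting $t\to\infty$, or $Q_p(\ix,\iy) \le 0$, which makes the GCS inequality trivial since the right-hand side is nonnegative) are handled separately by limiting arguments.

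For part (ii), the forward direction is trivial since $\RE{p(\iz)} \le |p(\iz)| \le B_d Q_p(\iz, \bar\iz)$. The reverse direction uses a rotation argument. The key observation is that, because $p \in \FORMS{n}{2d}$ is $2d$-homogeneous and $Q_p$ is bilinear in its two $d$-fold symmetric slots, for any $\theta \in \R$ we have
\[p(e^{\iim \theta} \iz) = e^{2d \iim \theta} p(\iz), \qquad Q_p\bigl(e^{\iim\theta} \iz,\, e^{-\iim\theta} \bar\iz\bigr) = Q_p(\iz, \bar\iz).\]
Moreover, $Q_p(\iz,\bar\iz)$ is real (since $Q_p$ is symmetric in its arguments and $p$ has real coefficients, so $\overline{Q_p(\iz,\bar\iz)} = Q_p(\bar\iz,\iz) = Q_p(\iz,\bar\iz)$). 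Applying the linear inequality to the complex vector $e^{\iim\theta}\iz$ gives
\[\RE\bigl(e^{2d\iim\theta} p(\iz)\bigr) \le B_d\, Q_p(\iz, \bar\iz) \quad \forall \theta \in \R.\]
Writing $p(\iz) = |p(\iz)| e^{\iim\phi}$ and choosing $\theta$ so that $2d\theta + \phi = 0$ makes the left-hand side equal to $|p(\iz)|$, yielding the desired complex GCS inequality.

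No step is a serious obstacle: the only mild subtlety is checking that $Q_p(\iz,\bar\iz)$ is real and invariant under the rotation $\iz \mapsto e^{\iim\theta}\iz$, which follows from the symmetry and reality of the tensor $T_p$ together with the bilinearity encoded in \cref{eq:biform}. I would present the argument by first reducing to these symmetry facts, then deploying the scaling trick for (i) and the phase-rotation trick for (ii).
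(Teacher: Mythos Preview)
Your proposal is correct and follows essentially the same approach as the paper: AM--GM for the easy direction of (i), a homogeneity/scaling argument for the converse (the paper scales only $\iy$ and phrases the conclusion via a discriminant, but this is the same trick), and the phase-rotation $\iz \mapsto e^{\iim\theta}\iz$ for (ii). Your treatment of the boundary cases in (i) and of the reality/rotation-invariance of $Q_p(\iz,\bar\iz)$ is, if anything, slightly more explicit than the paper's.
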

 \begin{proof}
   Fix positive integers $n$ and $d$, positive scalars $A_d$ and $B_d$, and let $p \in \FORMS{n}{2d}$.
   Let us prove part (i) of the lemma first, i.e.,
   that the form $p$ satisfies \cref{eq:real_generalized_cauchy_schwarz} if and only if it satisfies \cref{eq:linear_real_generalized_cauchy_schwarz}. The ``only if'' direction can be easily seen from the inequality
   \[\sqrt{ab} \le \frac{a+b}2 \quad \forall a, b \ge 0.\]
   We now turn our attention to the ``if'' direction. Applying inequality \cref{eq:linear_real_generalized_cauchy_schwarz} to vectors $\ix$ and $\lambda^{\frac 1d} \iy$, where  $\lambda$ is a nonnegative scalar, results in
   \[2 Q_p(\ix, \lambda^{\frac 1{2d}} \iy) \le A_d \; \left(p(\ix) +  p(\lambda^{\frac 1{2d}} \iy)\right).\]
 By homogeneity, we get that
 $2\lambda Q_p(\ix, \iy) \le A_d \; \left(p(\ix) + \lambda^2 p(\iy)\right)$. In other words, the univariate polynomial $f(\lambda) \coloneqq A_d \;p(\ix) + \lambda^2\; A_d \;p(\iy) - 2 \lambda \; Q_p(\ix, \iy)$ is nonnegative on $[0, \infty)$. If one of the scalars $p(\ix)$ or $p(\iy)$ is zero, or if the scalar $Q_p(\ix, \iy)$ is negative, there is nothing to prove. Otherwise, the polynomial $f$ has two (complex) roots, whose sum and product are both positive. The polynomial $f$ is therefore nonnegative on $[0, \infty)$ if and only if its roots are equal or are not real, which happens if and only if the discriminant $Q_p(\ix, \iy)^2 - A_d^2 p(\ix)p(\iy)$ is nonpositive.

 We now prove part (ii) of the lemma, i.e., that the form $p$ satisfies \cref{eq:complex_generalized_cauchy_schwarz} if and only if it satisfies \cref{eq:linear_complex_generalized_cauchy_schwarz}. Again, it is straightforward to see why the ``only if'' part is true, so we only prove the ``if'' part. Let us assume that $p$ satisfies inequality \cref{eq:linear_complex_generalized_cauchy_schwarz} with constant $B_d$, and let $\iz$ be an arbitrary complex vector in $\mathbb C^n$. Let $\iz' = e^{i \theta}\iz$, where $\theta \coloneqq \frac{\arg(p(\iz))}{2d}$ is chosen so that $p(\iz')$ is a nonnegative scalar. By homogeneity, we have
  \[|p(\iz)| = \RE{p(\iz')} \text{ and } Q_p(\iz,\bar \iz) = Q_p(\iz', \bar \iz').\]
  Applying inequality \cref{eq:linear_complex_generalized_cauchy_schwarz} to $\iz'$ leads to $|p(\iz)| \le B_d \; Q_p(\iz, \bar \iz)$, which is the desired result.
\end{proof}

We now show that it suffices to prove the GCS inequalities for convex forms in 2 variables. For this purpose, notice that for any $n\text{-variate}$ form $p$ of degree $2d$ and for any two vectors $\ix, \iy \in \R^n$, the quantities $p(\ix), p(\iy)$ and $Q_p(\ix, \iy)$ appearing in inequality~\cref{eq:real_generalized_cauchy_schwarz} only depend on the form $p$ through its restriction to the plane spanned by the vectors $\ix$ and $\iy$ given by
\begin{equation}
q(x, y) \coloneqq p(x \ix + y \iy).\label{eq:restriction_p_plane}
\end{equation}
Indeed,
\[p(\ix) = q(\ie1), p(\iy) = q(\ie2) \text{ and } Q_p(\ix, \iy) = Q_q(\ie1, \ie2),\]
where $\ie1^T = (1, 0)$ and $\ie2^T = (0, 1)$. Moreover, for any complex vector $\iz = \ix + i \iy$, we have
$p(\iz) = q(\ie1 + i \ie2), Q_p(\iz, \bar \iz)= Q_q(\ie1 + i \ie2, \ie1 - i \ie2),$
and thus all the quantities appearing in the inequality \cref{eq:complex_generalized_cauchy_schwarz} only depend on $p$ through its two-dimensional restriction $q$ as well.

The form $q$ defined in \cref{eq:restriction_p_plane} is bivariate and of the same degree as $p$. Furthermore, the form $q$ is convex if $p$ is. The proof of theorem \cref{thm:generalized_cauchy_schwarz} therefore reduces to showing existence of two constants $A_d$ and $B_d$ indexed by $d \in \N$, such that all bivariate convex forms $q$ of degree $2d$ satisfy the inequalities
\begin{equation}\label{eq:bivariate_real_generalized_cauchy_schwarz}
  2Q_q(\ie1, \ie2) \le  \; A_d \; \left(q(\ie1) + q(\ie2)\right),
\end{equation}
and
\begin{equation}\label{eq:bivariate_complex_generalized_cauchy_schwarz}
  \RE{q(\ie1+i\ie2)} \le B_d \; Q_q(\ie1+i\ie2, \ie1-i\ie2).
\end{equation}
We now show that these inequalities follow from the following simple lemma, whose proof is delayed until the end of the section.

%\subsection{Proof of the  generalized Cauchy-Schwarz inequalities in \cref{thm:generalized_cauchy_schwarz}}%\label{sec:proof_bivariate_cs}
\begin{lemma}\label{lem:compact_cone}
  Let $\Omega$ be closed cone in $\FORMS{n}{2d}$, and let $\ell$ be a linear form defined on $\Omega$ that satisfies
  \begin{equation}
    \left[\ell(p) = 0 \implies p = 0, \; \forall p \in \Omega\right] \quad \text{ and } \quad \ell \ge 0\;\text{on}\; \Omega.\label{eq:condition_compactness}  
\end{equation}
   Then the set $\Omega_\ell \coloneqq \{ p \in \Omega \; | \; \ell(p) \le 1\}$ is compact.
\end{lemma}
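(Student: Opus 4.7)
The plan is the standard homogeneity-plus-compactness argument, exploiting that $\FORMS{n}{2d}$ is a finite-dimensional normed space (equip it with the norm $\|\cdot\|$ induced by the Fischer inner product of \cref{sec:inner_product}). First I would observe that $\Omega_\ell$ is closed, as it is the intersection of the closed cone $\Omega$ with the closed half-space $\{p \in \FORMS{n}{2d} \;|\; \ell(p) \le 1\}$ (here $\ell$ is automatically continuous, being a linear form on a finite-dimensional space). In a finite-dimensional space, compactness of $\Omega_\ell$ then reduces to boundedness.

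To establish boundedness, I would consider the unit sphere $S \coloneqq \{p \in \FORMS{n}{2d} \;|\; \|p\| = 1\}$ and look at the set $\Omega \cap S$. Since $\Omega$ is closed and $S$ is compact, $\Omega \cap S$ is compact. By continuity, the linear form $\ell$ attains its infimum $m \coloneqq \inf_{p \in \Omega \cap S} \ell(p)$ at some $p^* \in \Omega \cap S$. The two hypotheses on $\ell$ now enter decisively: the assumption $\ell \ge 0$ on $\Omega$ gives $m \ge 0$, while the assumption that $\ell(p) = 0$ forces $p = 0$ for $p \in \Omega$ rules out $m = 0$ (since $p^* \ne 0$). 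Hence $m > 0$.

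Finally, I would use homogeneity to propagate this positive lower bound. For any nonzero $p \in \Omega_\ell$, the rescaled form $p/\|p\|$ lies in $\Omega \cap S$ (since $\Omega$ is a cone), so $\ell(p/\|p\|) \ge m$, i.e., $\|p\| \le \ell(p)/m \le 1/m$. This shows $\Omega_\ell \subseteq \{p \;|\; \|p\| \le 1/m\}$, concluding boundedness and therefore compactness.

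The argument is quite short and I would not anticipate any real obstacle: the only subtle point is recognizing that the non-degeneracy condition \cref{eq:condition_compactness} is exactly what guarantees $m > 0$ (otherwise one could have $\ell$ vanish on a ray in $\Omega$, along which $\Omega_\ell$ would be unbounded), and that everything else is a consequence of working in finite dimension with a closed cone.
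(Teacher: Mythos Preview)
Your proof is correct and follows essentially the same approach as the paper: both arguments reduce boundedness to the behavior of $\ell$ on the (compact) intersection of $\Omega$ with the unit sphere in the finite-dimensional space $\FORMS{n}{2d}$. The only cosmetic difference is that the paper argues by contradiction (an unbounded sequence in $\Omega_\ell$, once normalized, subconverges to a unit-norm element of $\Omega$ on which $\ell$ vanishes), whereas you argue directly by minimizing $\ell$ over $\Omega\cap S$ to obtain a strictly positive lower bound $m$; these are two standard phrasings of the same compactness idea.
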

\begin{proof}[Proof of \cref{thm:generalized_cauchy_schwarz}]
Fix $d \in \N$, and define two linear forms  $\ell$ and $s$ acting on $q \in \CONVEX{2}{2d}$ as
$\ell(q) \coloneqq q(\ie1) + q(\ie2)$ and $s(q) \coloneqq Q_q(\iz,\bar \iz)$, where $\iz = \ie1 + i\ie2$. We start by showing that the linear forms $\ell$ and $s$ satisfy the condition in \cref{eq:condition_compactness} with $\Omega = \CONVEX{2}{2d}$. If $q \in \CONVEX{2}{2d}$, then $q$ is nonnegative and therefore $\ell(q) \ge 0$. Moreover, because of the relationship between $Q_q(\iz, \bar \iz)$ and the integral of $q$ described in \cref{eq:Qp_integral}, it is clear that $s(q) \ge 0$ as well. Now assume that a form $q \in \CONVEX{2}{2d}$ satisfies $\ell(q) = 0$. Since $q$ is nonnegative, we have $q(\ie1) = q(\ie2) = 0$. By convexity, the restriction of the function $q$ to the segment linking $\ie1$ to $\ie2$ is identically zero. By homogeneity, $q$ must be identically zero. Similarly, if $s(q) = 0$, then by \cref{eq:Qp_integral}, the average of $q$ on the unit disk is zero, and since the form $q$ is assumed to be nonnegative, it must be identically 0.

By \cref{lem:compact_cone}, the following two sets must therefore be compact:
\[L \coloneqq \{q \in \CONVEX{2}{2d} \; | \; q(\ie1) + q(\ie2) \le 1\}, \quad S \coloneqq \{q \in \CONVEX{2}{2d} \; | \; Q_p(\ie1+i\ie2,\ie1-i\ie2)  \le 1\}.\]
Let $\|\cdot\|$ be any norm on the vector space $\FORMS{2}{2d}$ and define the scalars $\alpha, \beta$ as follows:
\[\alpha \coloneqq \sup_{q \in L} \|q\| \text{ and } \beta \coloneqq \sup_{q \in \FORMS{2}{2d}} \frac{Q_q(\ie1, \ie2)}{\|q\|}.\]
We will show that inequality \cref{eq:bivariate_real_generalized_cauchy_schwarz} holds with constant $A_d = \alpha\beta$.
Note that $\alpha$ is finite because $L$ is compact, and $\beta$ is finite because the map $q \mapsto Q_q(\ie1, \ie2)$ is a linear function over a finite dimensional space. Let $q \in \CONVEX{2}{2d}$ and assume that $q$ is not zero, so that the scalar $\ell(q)$ is positive. On the one hand, we have $Q_q(\ie2, \ie2) \le \beta \|q\|$. On the other hand, $\|q\| \le \alpha \ell(q)$ because the form $\frac{q}{\ell(q)}$ is in the set $L$. We have just shown that $Q_q(\ie2, \ie2) \le \alpha \beta \ell(q)$, which concludes the proof of inequality \eqref{eq:bivariate_real_generalized_cauchy_schwarz}. A similar argument shows the existence of a finite constant $B_d$ for which \eqref{eq:bivariate_complex_generalized_cauchy_schwarz} hold.
\end{proof}

\begin{proof}[Proof of \cref{lem:compact_cone}]
  Let $\Omega$ and $\ell$ be as in the statement of the lemma, and let $\| \cdot \|$ be any norm of $\FORMS{n}{2d}$.
  It is clear that the set $\Omega_\ell$ is a closed set as it is the intersection of a half space with $\Omega$.

  Suppose it is not bounded, i.e., suppose there exists a sequence $(q^{(k)})_k$ of $\Omega_\ell$ such that 
  $\|q^{(k)}\| \rightarrow \infty \text{ as } k \rightarrow \infty$.
  By taking a subsequence if necessary, we can assume that the sequence $(q^{(k)})_k$ does not contain zero.
  The sequence $\left(\frac{q^{(k)}}{\|q^{(k)}\|}\right)_k$ lives in the cone $\Omega$, and is bounded (by one), so it admits a converging subsequence. Let $q_\infty \in \Omega$ denote its limit.
  Since the function $\ell$ is bounded by $1$ on $\Omega_\ell$,  the ratio $\frac{\ell(q^{(k)})}{\|q^{(k)}\|}$ tends to $0$ as $k \rightarrow \infty$, and therefore $\ell(q_\infty) = 0$. By~\cref{eq:condition_compactness}, the form $q_\infty$ is itself identically zero. Yet, $\|q_\infty\| = 1$, which is a contradiction.
  %Note that on the one hand $\|q_\infty\| = 1$, so $q_\infty$ is not identically $0$. On the other hand, 
\end{proof}

% \subsection{Proof of the second generalized Cauchy-Schwarz inequality in \cref{eq:real_generalized_cauchy_schwarz}}
\subsection{Values of the optimal constants $A_d^*$ and $B_d^*$ defined in \cref{eq:Ad_Bd_star}}
\label{sec:valueof_Ad_Bd}
Fix $d \in \N$. We have shown in the previous section that
\begin{equation}
  \begin{aligned}
A_d^* &= \min A \text{ s.t. } \; 2\; Q_q(\ie1+i\ie2, \ie1-i\ie2) \le A\left(q(\ie1) + q(\ie2)\right) \quad &\forall q \in \CONVEX{2}{2d},\\
B_d^* &= \min B \text{ s.t. } \; \RE{q(\ie1+i\ie2)} \le B  Q_q(\ie1+i\ie2, \ie1-i\ie2) \quad &\forall q \in \CONVEX{2}{2d}.
  \end{aligned}
\label{eq:primal}
\end{equation}
This formulation is useful for finding lower bounds on the constants $A_d^*$ and $B_d^*$. Indeed, to show that $A_d^* > A$ for some scalar $A$ , it suffices to exhibit a convex bivariate form $q$ that satisfies $2Q_q(\ie1+i\ie2, \ie1-i\ie2) > A\left(q(\ie1) + q(\ie2)\right)$. An similar statement can be made for $B_d^*$ as well.

To find upper bounds on the constants $A_d^*$ and $B_d^*$, we take a dual approach. For all scalars $A$ and $B$, we define the linear forms
\begin{equation}
  % \begin{array}{@{}r@{\;}l@{}}
  %  \ell_A\colon \FORMS{2}{2d}&\to \R\\
  %  p &\mapsto A \;(\partial_x^{2d}+\partial_y^{2d}) - 2 \partial_x^d\partial_y^d,
         %     \end{array}
  \ell_A \coloneqq A (\partial_x^{2d}+\partial_y^{2d}) - 2 \partial_x^d\partial_y^d,
  \label{eq:ell_A}
\end{equation}
and
\begin{equation}
  % \begin{array}{@{}r@{\;}l@{}}
  %   s_B\colon \FORMS{2}{2d}&\to \R\\
  %   p &\mapsto B \;(\partial_x^2+\partial_y^2)^{d} - \RE{(\partial_x + i \partial_y)^{2d}}
          %     \end{array}.
  s_B \coloneqq B(\partial_x^2+\partial_y^2)^{d} - \RE{(\partial_x + i \partial_y)^{2d}}.
  \label{eq:s_B}
\end{equation}
Because of our discussion in \cref{sec:inner_product}, the constants $A_d^*$ and $B_d^*$ can be found by solving the following optimization problems, dual to the optimization problems in \cref{eq:primal}.
\begin{equation}
  \begin{aligned}
  A_d^* &= \min A \; &\text{ s.t. } \quad&\ell_A \in \CONVEX{2}{2d}^*,\\
  B_d^* &= \min B     &\text{ s.t. } \quad&s_B \in \CONVEX{2}{2d}^*.
\end{aligned}
  \label{eq:dual}
\end{equation}
In other words, in order to prove that $A_d^* \le A$ for some scalar $A$, one has to show that $\ell_A$ can be decomposed as in \cref{eq:dual_bivariate_convex}. An identical statement can be made for $B_d^*$ here too.

\subsubsection{Values of the optimal constants $A_d^*$ defined in \cref{eq:Ad_Bd_star}}
We will show in this section that the optimization problems in \cref{eq:dual} are tractable. The following theorem shows that convex bivariate forms are also sos-convex.

\begin{theorem}\cite[Theorem 5.1]{ahmadi2013complete}\label{thm:cd_is_spectrahedron}
  A bivariate form $q(x,y) = \sum_{i=0}^{2d}q_ix^iy^{2d-i}$ is convex if and only if it is sos-convex, i.e., if there exists a positive semidefinite $2d \times 2d$ matrix $Q$ such that
      \begin{equation}\label{eq:convex_hessian_sos}
      \iu^T\nabla^2q(x,y)\iu = z^TQz \quad \forall x,y \in \R, \forall \iu \in \R^2,
      \end{equation}
      where
      $z^T \coloneqq (u_1 x^{d-1}, u_1x^{d-2}y, \dots, u_1y^{d-1}, u_2 x^{d-1}, u_2x^{d-2}y, \dots, u_2y^{d-1})$
      is the vector of monomials in the variables $x,y,u_1,u_2$.
\end{theorem}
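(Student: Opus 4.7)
The ``if'' direction is immediate: a decomposition $\iu^T \nabla^2 q(x,y) \iu = z^T Q z$ with $Q \succeq 0$ gives $\iu^T \nabla^2 q(x,y) \iu \ge 0$ for all $x, y \in \R$ and $\iu \in \R^2$, so $q$ is convex. My plan for the ``only if'' direction is to reduce the claim to a polynomial matrix factorization of the $2 \times 2$ Hessian. Concretely, I will show that $H(x,y) \coloneqq \nabla^2 q(x,y)$, viewed as a symmetric $2 \times 2$ matrix whose entries are bivariate forms of degree $2d-2$, admits a factorization
\[H(x,y) = V(x,y)^T V(x,y),\]
where $V(x,y)$ is an $N \times 2$ matrix (for some $N$) whose entries are bivariate forms of degree exactly $d-1$. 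The sos-convex representation will then follow essentially by bookkeeping.

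To construct $V$, I would dehomogenize by setting $y = 1$, producing a symmetric $2 \times 2$ univariate matrix polynomial $M(t) \coloneqq H(t,1)$ of degree at most $2d-2$ that is positive semidefinite for every $t \in \R$ by convexity of $q$. The matrix Fej\'er--Riesz theorem then guarantees a factorization $M(t) = P(t)^T P(t)$ with $P(t)$ a polynomial matrix of $N$ rows and $2$ columns whose entries have degree at most $d-1$. Homogenizing the entries of $P(t)$ to bivariate forms of degree exactly $d-1$ yields a candidate $V(x,y)$; since $V(t,1)^T V(t,1) = P(t)^T P(t) = H(t,1)$ for all $t \in \R$ and both sides are forms of degree $2d-2$ in $(x,y)$, the identity $H(x,y) = V(x,y)^T V(x,y)$ extends by bihomogeneity.

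To extract the sos-convex representation, denote the two columns of $V$ by $v_1(x,y), v_2(x,y) \in \R[x,y]^N$, each a vector of degree-$(d-1)$ forms. Writing $v_j(x,y) = M_j\, w(x,y)$ with $w(x,y) \coloneqq (x^{d-1}, x^{d-2}y, \dots, y^{d-1})^T$ and $M_j \in \R^{N \times d}$ a matrix of scalar coefficients, we obtain
\[\iu^T H(x,y) \iu = \|V(x,y)\iu\|^2 = \|u_1 M_1 w + u_2 M_2 w\|^2 = \|(M_1 \mid M_2)\, z\|^2 = z^T Q z,\]
where $z = (u_1 w^T, u_2 w^T)^T$ is exactly the vector in the statement and $Q \coloneqq (M_1 \mid M_2)^T (M_1 \mid M_2)$ is a $2d \times 2d$ Gram matrix, hence positive semidefinite.

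The main technical obstacle is the matrix Fej\'er--Riesz theorem invoked in the second step. A more self-contained alternative, tailored to the $2 \times 2$ situation at hand, would be to apply the scalar Fej\'er--Riesz theorem to each of the diagonal entries $H_{11}, H_{22}$ (which are nonnegative bivariate forms of degree $2d-2$, hence sums of squares of forms of degree $d-1$), and then to exploit the PSD condition $H_{11} H_{22} \ge H_{12}^2$ to align these Gram decompositions with a compatible off-diagonal block; some care would be needed to handle the edge cases where a diagonal entry has a zero of high order or vanishes identically, which is where I would expect the proof to get delicate.
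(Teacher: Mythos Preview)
The paper does not prove this statement; it is quoted verbatim from \cite[Theorem 5.1]{ahmadi2013complete} and used as a black box to justify that $\CONVEX{2}{2d}$ is a projected spectrahedron. So there is no ``paper's own proof'' to compare against here.

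That said, your argument is correct and is essentially the standard route to this result, and close to the one in the cited reference. The heart of the matter is exactly the step you isolate: a univariate symmetric matrix polynomial $M(t)$ that is positive semidefinite for all real $t$ admits a spectral factorization $M(t)=P(t)^TP(t)$ with $\deg P \le \tfrac12 \deg M$ (the matrix Fej\'er--Riesz theorem). Once that is granted, the dehomogenize/rehomogenize passage is sound because two forms of the same degree $2d-2$ that agree after setting $y=1$ must coincide, and your extraction of the $2d\times 2d$ Gram matrix via $Q=(M_1\mid M_2)^T(M_1\mid M_2)$ is clean.

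The ``more self-contained alternative'' you float at the end---decomposing the diagonal entries separately and then trying to reconcile the off-diagonal block---is where genuine difficulties would appear (matching the square-root factors of $H_{11}$ and $H_{22}$ so that their cross terms reproduce $H_{12}$ is not automatic, especially when $H_{11}$ or $H_{22}$ has repeated roots or vanishes). There is no reason to prefer it over the matrix factorization route, which handles all edge cases uniformly.
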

By expanding both sides of \cref{eq:convex_hessian_sos} and matching the coefficients of the polynomials that appear on both sides, we obtain an equivalent system of linear equations involving the coefficients of the form $q$ and the entries of the matrix $Q$. What we have just shown is that
\[\CONVEX{2}{2d} = \{ q \in \FORMS{2}{2d} \; | \; \exists Q \succeq 0 \text{ s.t. $q$ and $Q$ satify the linear equations in \cref{eq:convex_hessian_sos}}\}.\]

This set is a {\it projected spectrahedron}, i.e., it is defined via linear equations and linear matrix inequalities. The class of projected spectrahedra is stable by taking the convex dual, so $\CONVEX{2}{2d}^*$ is also a projected spectrahedron. Optimizing linear functions over such sets (or their duals) is therefore an semidefinite program (SDP). Semidefinite programming is a well-studied subclass of convex optimization problems that can be solved to arbitrary accuracy in polynomial time. 
Because of our characterization of $A_d^*$ in \cref{eq:dual}, it should be clear that $A_d^*$ is the optimal value of an SDP.
We report in \cref{tbl:solution_sdp} the values of $A_d$ to $4$ digits of accuracy obtained using the solver MOSEK~\cite{mosek}.

\begin{table}\centering
  \begin{tabular}{|c|l|l|l|l|l|l|l|l|}
    \hline
    $d$   & 1 & 2 & 3 & 4 & 5 & 6 & 7 & 8 \\
    \hline
    $A_d^*$ & 1.000  & 1.000  & 1.000  & 1.011 & 1.000 & 1.061 & 1.000 & 1.048 \\
    \hline
  \end{tabular}
  \caption{\label{tbl:solution_sdp}Approximation of the value of the constant $A_d^*$ defined in \cref{eq:Ad_Bd_star} obtained by numerically solving the SDP in \cref{eq:dual}}
\end{table}

Note that in practice, numerical software will only return an {\it approximation} of the optimal solution to an SDP. Such approximations can nevertheless be useful as they help formulate a ``guess'' for what the exact solution might be, especially if the solution sought contains only rational numbers (with small denominators). In particular, the following identities, which are trivial to verify, were obtained by rounding solutions obtained from a numerical SDP solver, and constitute a formal proof that $A_1^* = A_2^* = A_3^* = 1$. %because of \cref{eq:dual_bivariate_convex}.
\begin{align*}
  \partial_x^2+\partial_y^2 - 2\partial_x\partial_y &= (\partial_x-\partial_y)^2,\\
  \partial_x^4+\partial_y^4 - 2 \partial_x^2\partial_y^2 &= (\partial_x-\partial_y)^2 (\partial_x+\partial_y)^2,\\
  \partial_x^6+\partial_y^6 - 2 \partial_x^3\partial_y^3 &= \frac12 (\partial_x-\partial_y)^2 ( \partial_x^4 + \partial_y^4 + (\partial_x+\partial_y)^4).
\end{align*}

We also note that few algebraic methods have been developed for solving SDPs exactly, especially for problems of small sizes \cite{parrilo_exact_sdp_2003, henrion_exact_sdp_2016}. For instance, we were able to solve the SDP in \eqref{eq:dual}  characterizing $A_d^*$ for $d=4$. The key steps in this computation are (i) exploiting the symmetries of the problem to reduce the size of the SDP \cite{parrilo_symmetry_2004}, (ii) formulating the corresponding {\it Karush-Kuhn-Tucker} equations (see, e.g., \cite{lasserre_handbook_sdps_2011}) and (iii) solving these polynomial equations using variable elimination techniques.\footnote{The curious reader is referred to \href{http://www.bachirelkhadir.com/research/code/Exact_value_of_A4_star.ipynb}{this Sage notebook} describing these steps in more details  \cite{sage_2010}.}
The value of $A_4^*$ is given by
\[\frac{1}{70} \, \omega^{\frac 13} + \frac{128}{15} \, \omega^{-\frac13} + \frac{11}{35}, \text{ where } \omega \coloneqq 14336 + i \frac{14336\sqrt{3}}{9}.\]
Unlike the constants $A_d^*$ for $d \le 3$, the constant $A_4^*$ is not equal to 1. In fact $A_4^*$ is not even a rational number, but an algebraic number of degree three with minimal polynomial given by
\[t^3 - \frac{33}{35}\;t^2 - \frac{17}{245}\;t + \frac{13}{42875}.\]
In \cref{sec:A_d_even}, we prove that $A_d^* > 1$ whenever $d$ is an even integer larger than $4$.

{\bf Conjecture and open problem.} Supported by the numerical evidence in \cref{tbl:solution_sdp}, we conjecture that the constant $A_d^*$ defined in \eqref{eq:Ad_Bd_star} is equal to 1 when the integer $d$ is odd, and we leave open the problem of finding the exact value of $A_d^*$ for even integers $d$ larger than $4$.

\subsubsection{Exact values of the optimal constants  $B_d^*$ defined in \cref{eq:Ad_Bd_star}}
The goal of this section is to show that
\[B_d^* = \frac{{2(d-1)\choose d-1}}{d} \quad \forall d \in \N.\]

% We have shown in the previous section that
% $$B_d^* = \min B \text{ s.t. } \forall q \in \CONVEX{2}{2d}, \; \RE{q(\ie1+i\ie2)} \le B \; Q_q(\ie1+i\ie2, \ie1-i\ie2).$$
% Taking a dual approach, for all scalars $B$, we let $\ell_B: \FORMS{2}{2d} \mapsto \R$ stand for the linear form $B \;(\partial_x^2+\partial_y^2)^{d} - \RE{(\partial_x + i \partial_y)^{2d}}.$
% Because of our discussion in \cref{sec:inner_product}, $B_d^*$ is the minimum $B$ for which $\ell_B \in \CONVEX{2}{2d}^*$. The following proposition shows that $\ell_B \in \CONVEX{2}{2d}^*$ for $B = \frac{{2(d-1)\choose d-1}}{d}$ and therefore, $B_d^* \le \frac{{2(d-1)\choose d-1}}{d}$.
%
%
% For any $d \in \N$, and for any constant and $B$, let $\ell_A^d,s_A^d: \FORMS{2}{2d} \mapsto \R$ stand for the two linear forms defined by
% $$\ell_A^d \coloneqq \partial_x^{2d} + \partial_y^{2d} - 2 \;A\; \partial_x^d\partial_y^d \text{ and } s_B^d \coloneqq B \;(\partial_x^2+\partial_y^2)^{d} - \RE{(\partial_x + i \partial_y)^{2d}}.$$
% Because of our discussion in \cref{sec:inner_product}, the task of proving \cref{thm:generalized_cauchy_schwarz} is equivalent to finding two constants $A_d$ and $B_d$ such that $\ell_{A_d}^{d}, s_{B_d}^{d} \in \CONVEX{2}{2d}^*$.
%
\noindent The following proposition shows that for all positive integers $d$, $s_B \in \CONVEX{2}{2d}^*$ for $B = \frac{{2(d-1)\choose d-1}}{d}$, where $s_B$ is defined in \cref{eq:s_B}, and therefore, $B_d^* \le \frac{{2(d-1)\choose d-1}}{d}$.

\begin{proposition}\label{prop:proof_complex_cs}
  For all positive integers $d$, for all $x,y \in \R$,
  \begin{equation}\label{eq:proof_complex_cs}
  \frac{{2(d-1)\choose d-1}}{d} (x^2+y^2)^d - \RE{(x+i y)^{2d}} = \frac{4^d}{2d} \sum_{k=0}^{d-1} \left(-s_kx + c_k y\right)^2 \; \left( c_k x + s_k y\right)^{2d-2},
\end{equation}
where $c_k = \cos(\frac{k\pi}{2d})$ and $s_k = \sin(\frac{k\pi}{2d})$
%$\iu_k \coloneqq  \begin{pmatrix}\cos(k\frac{\pi}{2d}), & \sin(k\frac{\pi}{2d})\end{pmatrix} ^T$
for $k = 0, 1, \dots, 2d-1$.
\end{proposition}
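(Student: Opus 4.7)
The plan is to verify the identity by converting both sides to polar coordinates and then matching Fourier coefficients in the angular variable.

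First I would parameterize $x = r\cos\theta$ and $y = r\sin\theta$, so that $x^2+y^2 = r^2$ and $\RE{(x+iy)^{2d}} = r^{2d}\cos(2d\theta)$; the left-hand side then equals $r^{2d}\bigl(\tfrac{\binom{2d-2}{d-1}}{d} - \cos(2d\theta)\bigr)$. For the right-hand side, the rotation identities $\cos\alpha\cdot x + \sin\alpha\cdot y = r\cos(\theta-\alpha)$ and $-\sin\alpha\cdot x + \cos\alpha\cdot y = r\sin(\theta-\alpha)$ show that each summand is of the form $r^{2d}\sin^2\phi_k\,\cos^{2d-2}\phi_k$, with $\phi_k = \theta - \alpha_k$ for the appropriate equally-spaced angle $\alpha_k$. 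Using $\sin^2\phi = 1-\cos^2\phi$, each summand becomes $r^{2d}\bigl(\cos^{2d-2}\phi_k - \cos^{2d}\phi_k\bigr)$, so the whole identity reduces to a trigonometric one in the single variable $\theta$.

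Next, I would apply the classical Fourier expansion
\[
\cos^{2m}\phi = \frac{1}{4^m}\binom{2m}{m} + \frac{2}{4^m}\sum_{j=1}^{m}\binom{2m}{m-j}\cos(2j\phi)
\]
for $m\in\{d-1,d\}$, substitute $\phi = \phi_k$, and sum over $k$. The conceptual heart of the argument is the orthogonality relation
\[
\sum_{k=0}^{d-1}\cos\bigl(2j(\theta - \tfrac{k\pi}{d})\bigr) = \RE{\Bigl(e^{2ij\theta}\sum_{k=0}^{d-1}e^{-2ijk\pi/d}\Bigr)},
\]
whose inner geometric sum equals $d$ when $d \mid j$ and $0$ otherwise. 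Since the Fourier indices in the expansion run over $j=1,\dots,d$, this orthogonality kills every intermediate harmonic, and the right-hand side collapses to just a constant term plus a multiple of $\cos(2d\theta)$.

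Finally I would match the two surviving coefficients. The $\cos(2d\theta)$ piece arises only from $-\cos^{2d}\phi_k$, whose relevant Fourier coefficient is $\tfrac{2}{4^d}$; multiplying by the overall factor $\tfrac{4^d}{2d}\cdot d = \tfrac{4^d}{2}$ produces exactly $-\cos(2d\theta)$, matching the left-hand side. The constant term equals $\tfrac{4^d}{2}\bigl(\tfrac{\binom{2d-2}{d-1}}{4^{d-1}} - \tfrac{\binom{2d}{d}}{4^d}\bigr)$, and the elementary identity $\binom{2d}{d} = \tfrac{2(2d-1)}{d}\binom{2d-2}{d-1}$ simplifies this to $\tfrac{\binom{2d-2}{d-1}}{d}$, as required. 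The main potential obstacle is verifying the orthogonality step, which requires the angles to be equally spaced around a half-circle (i.e., $\alpha_k = k\pi/d$); once this is in place, the remainder is a routine bookkeeping exercise with binomial coefficients.
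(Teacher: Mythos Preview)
Your approach is correct and essentially the same as the paper's: both pass to polar coordinates by homogeneity and then collapse the resulting trigonometric sum via the orthogonality relation $\sum_{k=0}^{d-1} e^{-2ijk\pi/d} = d\cdot 1_{\{d\mid j\}}$; the only cosmetic difference is that you invoke the Fourier expansion of $\cos^{2m}\phi$ explicitly, whereas the paper expands $(e^{i\theta}r_j \pm e^{-i\theta}\bar r_j)^{\,\cdot}$ directly before summing. Note that you (correctly) take $\alpha_k = k\pi/d$, which agrees with the paper's appendix computation; the angles $k\pi/(2d)$ appearing in the proposition statement seem to be a typo, since with those angles the orthogonality fails for odd harmonics and the identity is already false at $d=2$.
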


\begin{proof}
  Identity \cref{eq:proof_complex_cs} is homogeneous in $\ix^T = (x, y)$. It is therefore sufficient to prove that it holds when $\ix$ is a unit vector. Let $\ix$ be such a vector, and let us write $x = \cos(\theta)$ and $y = \sin(\theta)$ for some $\theta \in \R$. Then identity \cref{eq:proof_complex_cs} becomes
  \[\frac{{2(d-1)\choose d-1}}{d} - \cos(2d \theta) = \frac{4^d}{2d} \sum_{k=0}^{d-1} \sin^2\left(\frac{k\pi}{2d}-\theta\right) \cos^{2d-2}\left(\frac{k\pi}{2d}-\theta\right) \; \forall \theta \in \R.\]
  The proof of this trigonometric identity is purely computational, and can be found in \cref{app:trigo}.
\end{proof}

Let us now show that for all  $d \in \N$, the constant $B_d^*$ is bounded below by  $\frac{{2d-2 \choose d-1}}d$. To do so, we exhibit  a family of nonzero bivariate convex forms $(q_d)_{d \in \N}$ that satisfy
\[ \RE{q_d(\ie1+i\ie2)} =  \frac{{2d-2 \choose d-1}}d \; Q_{q_d}(\ie1+i\ie2,\ie1-i\ie2) \quad \forall d \in \N .\]
We plot in \cref{fig:optimal_poly_complex_cs} the $1\text{-level}$ sets of the polynomials $q_d$ for $d=1,\dots,4$.
  
\begin{proposition}\label{prop:optimal_poly_complex_cs}
  For every positive integer $d$, the form $q_d$ defined by
  \begin{equation}
  q_d(x, y) \coloneqq \RE{ (x+i \; y)^{2d} } +  (2d-1)  (x^2+y^2)^d\label{eq:optimal_poly_complex_cs}
\end{equation}
  is convex and satisfies $\RE{q_d(\ie1+i\ie2)} =  \frac{{2d-2 \choose d-1}}d \; Q_{q_d}(\ie1+i\ie2,\ie1-i\ie2)$.
\end{proposition}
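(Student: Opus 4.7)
The plan is to handle the two assertions — convexity of $q_d$ and the claimed equality — separately. For convexity, I would decompose $q_d = f + (2d-1)g$ with $f(x,y) \coloneqq \RE{(x+iy)^{2d}}$ and $g(x,y) \coloneqq (x^2+y^2)^d$, and compute $\nabla^2 f$ and $\nabla^2 g$ explicitly. Since $f$ is harmonic as the real part of a holomorphic function of $x+iy$, its Hessian is traceless; a direct computation gives $f_{xx} = \RE{2d(2d-1)(x+iy)^{2d-2}}$ and $f_{xy} = -\IM{2d(2d-1)(x+iy)^{2d-2}}$, so the eigenvalues of $\nabla^2 f$ are $\pm 2d(2d-1)(x^2+y^2)^{d-1}$. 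On the other hand, since $g$ is rotationally invariant, the eigenvectors of $\nabla^2 g$ at any nonzero point are the radial and tangential directions, with eigenvalues $2d(2d-1)(x^2+y^2)^{d-1}$ and $2d(x^2+y^2)^{d-1}$ respectively. By Weyl's inequality,
\[\lambda_{\min}(\nabla^2 q_d) \;\geq\; \lambda_{\min}(\nabla^2 f) + (2d-1)\lambda_{\min}(\nabla^2 g) \;=\; -2d(2d-1)r^{2d-2} + (2d-1)\cdot 2d\, r^{2d-2} \;=\; 0,\]
where $r^2 = x^2+y^2$; hence $q_d$ is convex.

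For the equality, there are three routine computations. First, I would expand $\RE{(x+iy)^{2d}} = \sum_{j=0}^d \binom{2d}{2j}(-1)^j x^{2d-2j}y^{2j}$ and evaluate the resulting real polynomial at the complex point $(x,y) = (1,i)$; the substitution $y^{2j} \mapsto i^{2j} = (-1)^j$ cancels the sign factor to yield $\sum_j \binom{2d}{2j} = 2^{2d-1}$. Since $(x^2+y^2)^d$ evaluates to $0$ at $(1,i)$, we have $\RE{q_d(\ie1+i\ie2)} = q_d(1,i) = 2^{2d-1}$. Second, I would use the operator identity from \cref{sec:inner_product} that $Q_p(\iz, \bar \iz)$ corresponds to $\frac{1}{(2d)!}\Delta^d p$: since $\Delta f = 0$ and $\Delta = 4\partial_z \partial_{\bar z}$ gives $\Delta^d (z\bar z)^d = 4^d(d!)^2$, we obtain $Q_{q_d}(\ie1+i\ie2, \ie1-i\ie2) = (2d-1)\cdot 4^d(d!)^2/(2d)! = (2d-1) \cdot 4^d/\binom{2d}{d}$. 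Third, using $\binom{2d}{d} = \frac{2d(2d-1)}{d^2}\binom{2d-2}{d-1}$, the product $\frac{\binom{2d-2}{d-1}}{d}\cdot\frac{(2d-1)\cdot 4^d}{\binom{2d}{d}}$ simplifies to $4^d/2 = 2^{2d-1}$, matching the left-hand side and completing the proof.

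The main obstacle lies in the convexity step: Weyl's inequality only gives a \emph{lower} bound on $\lambda_{\min}(\nabla^2 q_d)$, because the eigenvectors of $\nabla^2 f$ and $\nabla^2 g$ need not align, so the pointwise minimum eigenvalues cannot naively be added. What makes the argument go through is the precise cancellation between $-2d(2d-1)r^{2d-2}$ (the negative eigenvalue of $\nabla^2 f$) and $(2d-1)\cdot 2d\, r^{2d-2}$ (the $(2d-1)$-scaled tangential eigenvalue of $\nabla^2 g$), which is also exactly the reason the multiplier $(2d-1)$ was chosen in the definition of $q_d$. Once the two Hessian spectra are in hand, the remaining algebraic identities and the final binomial simplification are elementary.
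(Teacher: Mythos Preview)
Your proof is correct. The computation of the equality is essentially the same as the paper's: both evaluate $q_d(1,i)=2^{2d-1}$ via the binomial expansion, and both compute $Q_{q_d}(\iz,\bar\iz)$ by noting that the harmonic part is annihilated by $\Delta$ and iterating $\Delta$ on $(x^2+y^2)^d$. The only cosmetic difference is that you use the Wirtinger identity $\Delta=4\partial_z\partial_{\bar z}$ where the paper uses the polar-coordinate formula $\Delta r^{2k}=4k^2r^{2k-2}$.

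The convexity argument, however, is genuinely different. The paper computes $\nabla^2 q_d$ explicitly in polar coordinates, obtaining the $2\times 2$ matrix
\[
2d(2d-1)r^{2d-2}\begin{pmatrix}\cos(2d\theta)+2d-1 & -\sin(2d\theta)\\ -\sin(2d\theta) & 1-\cos(2d\theta)\end{pmatrix}
\]
in the $(\ie r,\ie\theta)$ basis, and then checks that its trace and determinant are nonnegative. Your route is cleaner: you compute the spectra of $\nabla^2 f$ and $\nabla^2 g$ separately (namely $\pm 2d(2d-1)r^{2d-2}$ and $\{2d(2d-1)r^{2d-2},\,2dr^{2d-2}\}$) and invoke Weyl's inequality $\lambda_{\min}(A+B)\ge\lambda_{\min}(A)+\lambda_{\min}(B)$, which immediately gives $\lambda_{\min}(\nabla^2 q_d)\ge 0$ because the coefficient $2d-1$ is tuned exactly so that the bound vanishes. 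Your argument avoids writing down the combined Hessian and makes transparent why the constant $2d-1$ is the right one; the paper's explicit computation, on the other hand, yields the full Hessian (and hence, for instance, its determinant $(2d(2d-1)r^{2d-2})^2(2d-2)(1+\cos 2d\theta)$), which shows where convexity degenerates. Your closing paragraph about the ``obstacle'' is slightly overstated: Weyl's inequality is genuinely all you need, and the proof is complete as written.
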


\newcommand{\LabelSet}[1]
  {\begin{tikzpicture}%
  \draw[thick,->,>=latex] (-1.1,0)--(1.1,0) node[above] {$x$};%
  \draw[thick,->,>=latex] (0,-1.1)--(0,1.1) node[left] {$y$};%
    \draw[dashed,domain=0:2*pi,scale=1,samples=100] plot %
    ({deg(\x)}:{1 / (2*#1-1+cos(\x * 2 * #1 r))^(1/2/#1)});%
  \end{tikzpicture}}%

\begin{figure}[h!]
  \centering
  \begin{subfigure}[t]{.4\linewidth}
    \centering
    \begin{tikzpicture}%
      \draw[thick,->,>=latex] (-1.1,0)--(1.1,0) node[above] {$x$};%
      \draw[thick,->,>=latex] (0,-1.1)--(0,1.1) node[left] {$y$};%
      \draw[dashed] (1/1.41421,-1.1)--(1/1.41421,1.1);
      \draw[dashed] (1/-1.41421,-1.1)--(1/-1.41421,1.1);
    \end{tikzpicture}\\
    $q_1(x, y) = 2x^2$
  \end{subfigure}
  \begin{subfigure}[t]{.4\linewidth}
    \centering
    \LabelSet{2}\\
    $q_2(x, y) =4x^{4} + 4y^{4}$
  \end{subfigure}
  \begin{subfigure}[t]{.4\linewidth}
    \centering
    \LabelSet{3}\\
    $q_3(x, y) =6x^{6}+30x^{2} y^{4}+4y^{6}$
  \end{subfigure}
  \begin{subfigure}[t]{.4\linewidth}
    \centering
    \LabelSet{4}\\
    $q_4(x, y) = 8x^{8}+ 112x^4y^4 + 8y^{8}$
  \end{subfigure}
  \caption{Plot of the $1\text{-level}$ sets of the forms $q_d$ defined in \eqref{eq:optimal_poly_complex_cs} for $d=1,\dots,4$. These forms saturate the generalized Cauchy-Schwarz inequality in \eqref{eq:complex_generalized_cauchy_schwarz} and\label{fig:optimal_poly_complex_cs}}
\end{figure}
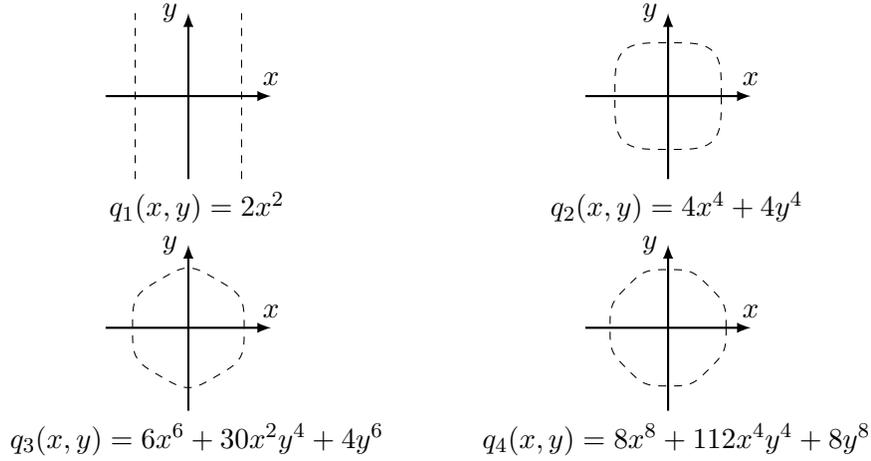

To give the proof of \cref{prop:optimal_poly_complex_cs}, it will be convenient for us to switch to {\it polar coordinates} $(r,\theta) \in [0, \infty) \times [0, 2\pi)$ defined by$x = r \cos(\theta) \text{ and } y = r \sin(\theta).$
More explicitly, for every $k \in \N$, every bivariate form $p \in \FORMS{2}{k}$ can be expressed in polar coordinates as $p(x, y) = r^{k} f(\theta)$, where $f$ is a polynomial expression in $\cos(\theta)$ and $\sin(\theta)$. In particular, the function $f$ is differentiable infinitely many times. The following lemma gives the expressions of the Hessian and Laplacian operators in polar coordinates. 
  
\begin{lemma}[Hessian and Laplacian in polar coordinates]
  \label{lem:diff_in_polar_coordinates}
    The Hessian and Laplacian of a form $p \in \FORMS{2}{k}$, whose expression in polar coordinate is $p(x,y) = r^{k}f(\theta)$, are given by 
    \[\nabla^2p(x,y) = r^{k-2} \; \left( k(k-1) f(\theta) \ie{rr} + (k-1) f'(\theta) \ie {r\theta} + \left(k +  f''(\theta) \right) \ie {\theta \theta }\right),\]
      \[\Delta p(x, y) = r^{k-2}\; \left( k^2 f(\theta) + f''(\theta)\right),\]
    where $\ie{r} \coloneqq \begin{pmatrix}\cos(\theta)\\\sin(\theta)\end{pmatrix}, \ie{\theta}\coloneqq \begin{pmatrix}-\sin(\theta)\\\cos(\theta)\end{pmatrix},$
    and $\ie{rr} = {\ie r} {\ie r}^T, \ie{r\theta} = {\ie r} {\ie\theta}^T +  {\ie\theta}{\ie r}^T, \ie{\theta\theta} = {\ie \theta} {\ie\theta}^T$.
    %$$\ie{rr} = \begin{pmatrix}\cos^2(\theta)&\cos(\theta)\sin(\theta)\\\cos(\theta)\sin(\theta)&\sin^2(\theta)\end{pmatrix}, \ie{r\theta} = \begin{pmatrix}-2\sin(2\theta)&\cos(2\theta)\\\cos(2\theta)&2\sin(2\theta)\end{pmatrix} \text{ and } \ie{\theta \theta} = \begin{pmatrix}\sin^2(\theta)&\cos(\theta)\sin(\theta)\\\cos(\theta)\sin(\theta)&\cos^2(\theta)\end{pmatrix}.$$
    %\ie{r\theta} = \begin{pmatrix}-2\sin(2\theta)&\cos^2(\theta) - \sin^2(\theta)\\\cos^2(\theta) - \sin^2(\theta)&2\sin(\theta)\cos(\theta)\end{pmatrix}
    %where $\ie{rr} = {\ie r} {\ie r}^T$, $\ie{r\theta} = {\ie r} {\ie\theta}^T +  {\ie\theta}{\ie r}^T$, $\ie{\theta\theta} = {\ie \theta} {\ie\theta}^T$, ${\ie r}^T = (\cos(\theta), \sin(\theta))$, and ${\ie \theta}^T = (-\sin(\theta), \cos(\theta))$.
\end{lemma}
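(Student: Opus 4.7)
The plan is to work entirely in polar coordinates and exploit the fact that $\{\ie r,\ie\theta\}$ forms an orthonormal frame at every point $(x,y)\ne(0,0)$, so the Hessian of the form $p=r^k f(\theta)$ has a natural description in this frame. The first step is to establish the decomposition $\nabla = \ie r\,\partial_r + \tfrac1r\,\ie\theta\,\partial_\theta$, which follows from the chain-rule identities $\partial_x r=\cos\theta$, $\partial_y r=\sin\theta$, $\partial_x\theta=-\sin\theta/r$, $\partial_y\theta=\cos\theta/r$. Applying this to $p$, together with $\partial_r p=kr^{k-1}f$ and $\partial_\theta p=r^k f'$, immediately yields
\[\nabla p = k\,r^{k-1}\,f(\theta)\,\ie r + r^{k-1}\,f'(\theta)\,\ie\theta.\]

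Next, I would compute $\nabla^2 p = \nabla\nabla^T p$ by applying the polar-form gradient operator componentwise to each term above. The essential identities are $\partial_r \ie r=\partial_r \ie\theta=0$ (the frame is $r$-independent) and $\partial_\theta \ie r=\ie\theta$, $\partial_\theta \ie\theta=-\ie r$ (the frame rotates with $\theta$). Carefully collecting the resulting tensor products of $\ie r$ and $\ie\theta$ yields polar-frame components $H_{rr}=k(k-1)r^{k-2}f$, $H_{\theta\theta}=r^{k-2}(kf+f'')$, and $H_{r\theta}=(k-1)r^{k-2}f'$; assembling these against the dyads $\ie{rr}$, $\ie{r\theta}$, $\ie{\theta\theta}$ reproduces the Hessian formula stated in the lemma. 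The Laplacian is then an immediate corollary: $\Delta p=\mathrm{tr}(\nabla^2 p)= H_{rr}+H_{\theta\theta} = r^{k-2}(k^2 f+f'')$, since the off-diagonal dyad $\ie{r\theta}$ is traceless by orthogonality of $\ie r$ and $\ie\theta$.

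The argument is essentially mechanical and presents no deep obstacle; the main piece of bookkeeping is the $H_{r\theta}$ entry. When applying $\tfrac1r\ie\theta\partial_\theta$ to the $\ie r$-component of $\nabla p$, differentiating the prefactor $1/r$ produces a spurious $-r^{k-2}f'$ contribution that cancels exactly one copy of $f'$, converting the naive $kf'$ into $(k-1)f'$. One should also keep in mind that $\ie{r\theta}$ is defined as the symmetric outer product $\ie r\ie\theta^T+\ie\theta\ie r^T$, so the Hessian coefficient against this dyad is $H_{r\theta}$ itself rather than $2H_{r\theta}$, consistent with the convention fixed in the lemma statement.
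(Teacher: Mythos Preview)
Your approach is essentially the same as the paper's: write $\nabla = \ie r\,\partial_r + \tfrac1r\,\ie\theta\,\partial_\theta$, apply it twice, and read off the trace for the Laplacian. The only wrinkle is your explanation of the $(k-1)f'$ coefficient: applying $\tfrac1r\ie\theta\partial_\theta$ to the $\ie r$-component of $\nabla p$ involves no differentiation of the prefactor $1/r$ (since $\partial_\theta(1/r)=0$); the cancellation that turns $kf'$ into $(k-1)f'$ in that term actually comes from the frame rotation $\partial_\theta\ie\theta=-\ie r$ acting on the $\ie\theta$-component of $\nabla p$, which contributes $-r^{k-1}f'\,\ie r$ before the $1/r$ scaling. (Alternatively, if you expand $\nabla\nabla^T$ as an operator before applying it to $p$, the same $-r^{k-2}f'$ arises from $\partial_r(1/r)$ in the cross term $\partial_r\bigl(\tfrac1r\partial_\theta\bigr)$; this is what the paper does.) Either bookkeeping yields the stated formula, so your sketch is correct.
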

\begin{proof}See \cref{sec:proof_hessian_laplacian_polar}.\end{proof}
% , we introduce some notation. We denote the {\it Laplacian} operator by $\Delta$, i.e. $\Delta \coloneqq \frac{\partial^2}{\partial x^2} + \frac{\partial^2}{\partial y^2}$. The notation $\Delta^d$ stands for the composition of the operator $\Delta$ with itself $d$ times. It is clear that applying the operator $\Delta^d$ to a form of degree $2d$ results in a constant function. Furthermore, a straightforward computation\footnote{One can check that the identity is valid for monomials and extend it by linearity to forms.} shows the following relationship between the Laplacian of a form $q$ and its associated biform $Q_q$:
% \begin{equation}
% (2d)! \; Q_q(\ie1 + \sqrt{-1} \ie2, \ie1 - \sqrt{-1} \ie2) = \Delta^d q.%%\label{eq:biform_iterated_laplacian}
% \end{equation}

\begin{proof}[Proof of \cref{prop:optimal_poly_complex_cs}]
  
  Fix a positive integer $d$ and let us prove that the form $q_d$ defined in \cref{eq:optimal_poly_complex_cs} is convex. Note that we can express $q_d$ in polar coordinates as follows,
  \[q_d(x, y) = \RE{r^{2d} e^{i2d\theta} + (2d-1) r^{2d}}.\]
Using \cref{lem:diff_in_polar_coordinates}, we get that
\[\nabla^2 r^{2d} =   r^{2d-2} \left(2d(2d-1) \ie {rr} + 2d \ie{\theta \theta} \right)\]
and
\[\nabla^2 \left(r^{2d}e^{i 2d\theta}\right) =  2d(2d-1) r^{2d-2}e^{i 2d\theta} \left(\ie{rr} + i  \ie{r \theta} - \ie {\theta \theta}\right).\]
By summing the previous two equations term by term and taking the real part, we get
\[\nabla^2 q_d(x,y) =  2d(2d-1)r^{2d-2} \begin{pmatrix} \cos(2d\theta) + 2d-1  & - \sin(2d\theta) \\- \sin(2d\theta) & - \cos(2d\theta) + 1\end{pmatrix}.\]
  The trace of the matrix in the right-hand side of this equation is $(2d)^2(2d-1)r^{2d-2}$, and its determinant is given by $(2d(2d-1)r^{2d-2})^2(2d-2)(1 + \cos(2d\theta) )$. Both the trace and the determinant of the Hessian matrix of $q_d$ are thus clearly nonnegative. This proves that this Hessian matrix is positive semidefinite and that the form $q_d$ is convex.

  Let us now compute $\RE{q_d(\iz)}$ and $Q_{q_d}(\iz, \bar \iz) \text{ where } \iz = \ie1+i\ie2$. By plugging $x = 1$ and $y = \iim$ in the right-hand side of the identity
\[\RE{(x+iy)^{2d}} = \sum_{k=0}^d  {2d \choose 2k} x^{2d-2k} (iy)^{2k},\]
we get that $q_d(\iz) = \sum_{k=0}^d  {2d \choose 2k} = 2^{2d-1}$.

We now compute $Q_{q_d}(\iz, \bar \iz)$. Because of the identification between linear forms and differential operators introduced in \cref{sec:inner_product}, this task is equivalent to computing $\Delta^{d}q_d$.
On the one hand, the function $f(x, y) \coloneqq (x+i y)^{2d}$ is holomorphic when viewed as a function of the complex variable $z = x+\iim y$, therefore
%$\Delta f$ is identically zero and $\Delta^{d} f = 0$. By linearity of the operators $\Delta^{d}$ and $\RE{\cdot}$, we conclude that
\[\Delta^d \RE{(x+\iim y)^{2d}} = 0.\]
On the other hand, by using \cref{lem:diff_in_polar_coordinates} again, we get for every positive integer $k$, $\Delta r^{2k} = 4 k^2 r^{2k-2}$, and by immediate induction, $\Delta^d r^{2d} = 2^{2d} d!^2$.
Overall, we get $\Delta^d q_d = (2d-1) 2^{2d} d!^2$, and therefore
\[Q_{q_d}(\iz, \bar \iz) = 2^{2d-1} \frac{d}{ {2d-2 \choose d-1} }.\]
In conclusion, we have just proved that
$\RE{q_d(\iz)}  = \frac{ {2d-2 \choose d-1} }{d} Q_{q_d}(\iz, \bar \iz).$
\end{proof}

\section{What Separates the Sum of Squares Cone from the Nonnegative Cone}
\label{sec:sos_vs_psd}
In \cite{blekherman_nonnegative_2012}, the author offers a complete description of the hyperplanes separating sos forms from non-sos forms inside the cone of nonnegative quaternary quartics. We include the high level details of that description here to make this article relatively self-contained.

If a form $p \in \PSD{4}{4}$ is not sos, then there exists a subset $V = \{\iv_1, \dots, \iv_8\}$  of $\mathbb C^n$ and complex numbers  $a_1, \dots, a_8 \in \mathbb C\setminus\{0\}$ that certify that fact in the sense that
\begin{equation}\label{eq:sos_sepearating_hyperplane}
\sum_{i=1}^8 a_i q(\iv_i) \ge 0 \quad \forall q \in \Sigma_{4, 4},
\end{equation}
but $\sum_{i=1}^8 a_i p(\iv_i) < 0$ \cite[Theorem 1.2]{blekherman_nonnegative_2012}.
Let us now explain where the set $V$ and the scalar $a_i$ come from.
The points in $V$ are the common zeros to three linearly independent quadratic forms $q_i(\ix)~=~\ix^TQ_i\ix$, where the $Q_i$ are $4 \times 4$ symmetric matrices and $i=1,2,3$  \cite[Lemma 2.9]{blekherman_nonnegative_2012}. Equivalently,
\[V = \{ \ix \in \mathbb R^4 \; | \;  q_1(\ix) = q_2(\ix) = q_3(\ix) \} = \{\iv_1, \dots, \iv_8\}.\]
Define $V^2 \coloneqq \{ \iv\iv^T \; | \; \iv \in V\}$. The eight elements of $V^2$ live in the $6\text{-dimensional}$ vector space of symmetric $4 \times 4$ matrices, and they are all orthogonal to the three-dimensional vector space spanned by $Q_1,Q_2$, and $Q_3$.
A simple dimension counting argument tells us that there must exist a linear relationship between the vectors $\iv_i$ of the form
\begin{equation}
\sum_{i=1}^8 \mu_i \iv_i\iv_i^T = 0,\label{eq:cayley-bacharach}
\end{equation}
for some $\mu_1, \dots, \mu_8 \in \mathbb C$. In fact, this relationship between the $\iv_i$ is unique (up to scaling). Furthermore, all the scalars $\mu_i$ must be nonzero. This is known as the Cayley-Bacharach relation \cite{bacharach_eisenbud_1996}. We assume from now on that all the $\mu_i$ have norm 1 (after possibly scaling the vectors $\iv_i$.)

Now that we have characterized the evaluation points $\iv_i$, let us turn our attention to the scalars $a_i$ in \cref{eq:sos_sepearating_hyperplane}. These scalars should satisfy \cite[Theorem 6.1 and Theorem 7.1]{blekherman_nonnegative_2012}
\begin{equation}
\sum_{i=1}^8 \frac1{a_i} = 0.\label{eq:sum_reciprocals_a}
\end{equation}

We now need to distinguish between the case where all the elements of $V$ are real (i.e., $V \subset \mathbb R^4$) and the case where they are not. 
In the former case, all the scalars $\mu_i$ must be real, exactly one of the scalars $a_i$ must be negative and the rest should be positive \cite[Theorem 6.1]{blekherman_nonnegative_2012}. By reordering the indices if necessary, we assume $a_1 < 0$ and $a_i > 0$ for $i > 1$. By scaling  all the scalars $a_i$, we assume
\begin{equation}
\frac1{a_1} = -\sum_{i=2}^8 \frac1{a_i} = -1,\label{eq:reciprocals_real}
\end{equation}
in which case inequality \cref{eq:sos_sepearating_hyperplane} reads
\begin{equation}\label{eq:real_hyperplane}
p(\iv_1) \le \sum_{i=2}^8 a_i p(\iv_i).
\end{equation}

% As the next lemma shows, for any nonnegative form $p$, $\sum_{i=1}^s a_i p(\iv_i) \ge 0$ holds for every $a_i$ of this form if and only if $\sqrt{p(\iv_1)} \le \sum \sqrt{p(\iv_i)}$.
% \begin{lemma}%\label{lem:holder}
%   $$\min_{a_i \ge 0, \sum \frac1{a_i} = 1} \sum_{i=2}^n a_ix_i =  \sum_{i=2}^n \sqrt{x_i}$$
%  $$\max_{a \in \mathbb C, \frac1a + \frac1{\bar a} = 1} az + \bar a \bar z =  \sqrt{2} \sqrt{|z| + \RE{z}|}$$
% \end{lemma}

% \begin{lemma}%\label{lem:holder}
%   For any positive scalars $x_1, \dots, x_n$, 
%   $\sqrt{x_1} \le \sum_{i=2}^n \sqrt{x_i}$ if and only if
%   $x_1 \le \sum_{i=2}^n a_ix_i$ for all positive $a_2, \dots, a_n$ such that $\sum_{i=2}^n \frac1{a_i} = 1.$
% \end{lemma}

In the case where one of the vectors $\iv_i$ is not real, it is proven in \cite[Corollary 4.4]{blekherman_nonnegative_2012} that $V$ could be taken so that exactly two of the vectors $\iv_i$ are not real, in which case they (and their coefficients $\mu_i$) should be conjugate of each other. Again, up to reordering, we can assume that $\iv_1 \coloneqq \iz$ is not real, $\iv_2 = \bar \iz$, $\mu_1 = \bar \mu_2$ and the rest of the vectors $\iv_i$ and scalars $\mu_i$ are real. By scaling, we can further assume that
\begin{equation}
\frac1{a_1} + \frac{1}{\bar a_1} = -\sum_{i=2}^8 \frac1{a_i} = -1.\label{eq:reciprocals_complex}
\end{equation}
In this case, the inequality in \cref{eq:sos_sepearating_hyperplane} reads
\begin{equation}\label{eq:complex_hyperplane}
a_1p(\iz) + \bar a_1 p(\bar \iz) + \sum_{i=3}^8 a_i p(\iv_i) \ge 0.
\end{equation}

We present the following simple lemma (whose proof can be found in \cref{app:proof_lemma}) that will let us rewrite the inequality in \cref{eq:sos_sepearating_hyperplane} without refering to the scalars $a_i$.
\begin{lemma}\label{lem:holder}
  For all nonnegative scalars $x_2, \dots, x_n,$ the maximum of the quantity  $\sum_{i=2}^n a_ix_i$ over all positive scalars $a_2, \dots, a_n$ satisfying $\sum_{i=2}^n \frac1{a_i} = 1$ is $(\sum_{i=2}^n \sqrt{x_i})^2$.
  
Furthermore, for any complex number $z$, the maximum value of the quantity $az + \bar a \bar z$ over all complex numbers $a$ satisfying $\frac1a + \frac1{\bar a} = 1$ is $2(|z| + \RE{z}).$
\end{lemma}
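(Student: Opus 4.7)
The plan is to prove the two parts separately by elementary means: Part~1 via a direct application of the Cauchy--Schwarz inequality, and Part~2 via an explicit parametrization of the (compact) feasible set for $a$.

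For Part~1, I would apply Cauchy--Schwarz to the pair of sequences $(1/\sqrt{a_i})_{i=2}^n$ and $(\sqrt{a_i x_i})_{i=2}^n$, yielding
\[\Bigl(\sum_{i=2}^n \sqrt{x_i}\Bigr)^2 \;=\; \Bigl(\sum_{i=2}^n \tfrac{1}{\sqrt{a_i}} \cdot \sqrt{a_i x_i}\Bigr)^{\!2} \;\le\; \Bigl(\sum_{i=2}^n \tfrac{1}{a_i}\Bigr)\Bigl(\sum_{i=2}^n a_i x_i\Bigr) \;=\; \sum_{i=2}^n a_i x_i,\]
the last equality using the constraint $\sum_{i=2}^n 1/a_i = 1$. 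To see that the value $\bigl(\sum_i \sqrt{x_i}\bigr)^2$ is attained by a feasible choice of $(a_i)$, I would invoke the equality case of Cauchy--Schwarz: the two sequences must be proportional, which forces $a_i = c/\sqrt{x_i}$ for some constant $c > 0$. The constraint then pins down $c = \sum_j \sqrt{x_j}$, and substituting back gives $\sum_i a_i x_i = \bigl(\sum_i \sqrt{x_i}\bigr)^2$ exactly, certifying that $\bigl(\sum_i \sqrt{x_i}\bigr)^2$ is the extremal value of $\sum_i a_i x_i$ over the feasible set (the $x_i = 0$ terms are simply dropped, with the remaining $a_i$ staying positive).

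For Part~2, I would first reformulate the constraint on $a = \alpha + i\beta$: multiplying $\tfrac{1}{a} + \tfrac{1}{\bar a} = 1$ by $|a|^2$ yields $a + \bar a = |a|^2$, i.e., $(\alpha-1)^2 + \beta^2 = 1$, so the feasible set is the circle of radius~$1$ centered at~$1$ in the complex plane. Parametrizing $a = 1 + e^{i\phi}$ with $\phi \in [0, 2\pi)$, I get
\[az + \bar a \bar z \;=\; 2\,\RE{az} \;=\; 2\,\RE{z} + 2\,\RE{e^{i\phi} z}.\]
Since $\RE{e^{i\phi} z}$ ranges over $[-|z|, |z|]$ as $\phi$ varies, with maximum $|z|$ attained at $\phi = -\arg z$ (when $z \neq 0$; the case $z = 0$ is trivial), the maximum of $az + \bar a \bar z$ over the feasible circle is $2\bigl(\RE{z} + |z|\bigr)$, and it is achieved by a concrete $a$ on the circle.

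There is no serious obstacle in either part; both calculations are classical. The only care needed is bookkeeping: in Part~1, one must verify that the Cauchy--Schwarz optimizer $a_i = \bigl(\sum_j \sqrt{x_j}\bigr)/\sqrt{x_i}$ is feasible (positive and satisfying $\sum 1/a_i = 1$), and in Part~2 that the parametrization $a = 1 + e^{i\phi}$ traces out precisely the entire feasible circle. Both are immediate.
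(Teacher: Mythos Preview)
Your argument is correct. A small point worth making explicit: as the lemma is worded, ``maximum'' in Part~1 is a typo for ``minimum'' (the supremum of $\sum_i a_i x_i$ under the constraint is clearly $+\infty$). Your Cauchy--Schwarz inequality goes in the right direction for the minimum, and your use of the neutral phrase ``extremal value'' suggests you noticed this; it would be cleaner to say so outright. The paper's own proof in the appendix also silently switches to the minimization problem.

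For Part~1 your route genuinely differs from the paper's. The paper rewrites the constraint as $\mathbf 1^T A^{-1}\mathbf 1 \le 1$ with $A = \diag(a_i)$, invokes the Schur complement to deduce $A \succeq \mathbf 1\mathbf 1^T$, and then evaluates both sides at $\mathbf u = (\sqrt{x_i})$. Your direct Cauchy--Schwarz with the sequences $(1/\sqrt{a_i})$ and $(\sqrt{a_i x_i})$ is shorter and avoids the matrix detour; the Schur-complement argument is really the same inequality dressed up, so nothing is lost by your simplification. Both proofs handle the degenerate case $x_i = 0$ with the same convention (send the corresponding $a_i$ to $+\infty$), which gives an infimum rather than an attained minimum; this is harmless for how the lemma is used.

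For Part~2 your approach and the paper's coincide. The paper parametrizes the feasible set as $a = 2\cos\theta\, e^{i\theta}$; since $2\cos\theta\, e^{i\theta} = 1 + e^{2i\theta}$, this is your circle $a = 1 + e^{i\phi}$ under $\phi = 2\theta$. One bookkeeping item you should add: the point $a = 0$ (i.e.\ $\phi = \pi$) lies on the circle but is excluded from the feasible set. This does not affect the supremum, since the objective there equals $0 \le 2(|z| + \RE z)$, but strictly speaking the maximum may fail to be attained when $z$ is a negative real (in which case both sides are zero).
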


Indeed, \cref{lem:holder} shows that a form $p$ satisfies inequality \cref{eq:real_hyperplane} for every $a_1, \dots, a_8 \in \R$ satisfying \cref{eq:reciprocals_real} if and only if
\[p(\iv_1) \le \left(\sum_{i=2}^8 \sqrt{p(\iv_i)}\right)^2,\]
and the same form satisfies Inequality \cref{eq:complex_hyperplane} for every $a_1 \in \C$ and $a_3 \dots, a_8 \in \R$ satisfying \cref{eq:reciprocals_complex} if and only if
\[2(|p(\iz)| + \RE{p(\iz)}) \le \left(\sum_{i=3}^8 \sqrt{p(\iv_i)}\right)^2.\]
We summarize the result of this section in the following theorem.

\begin{theorem}[\cite{blekherman_nonnegative_2012}]\label{thm:caracterization-sos}
  A nonnegative quaternary quartic form $p$ is sos if and only if both of the following conditions hold.
  \begin{itemize}
  \item For every $\iv_1, \dots, \iv_8 \in \mathbb R^4$ and $\alpha_2, \dots, \alpha_8 \in \{-1, 1\}$ such that $\iv_1\iv_1^T = \sum_{i=2}^8 \alpha_i \iv_i\iv_i^T$,
    \begin{equation}
    \label{eq:first_implication}
    p(\iv_1) \le \left(\sum_{i=2}^8  \sqrt{p(\iv_i)}\right)^2.
    \end{equation}
  \item For every $\iz \in \mathbb C^4$, for every $\iv_3, \dots, \iv_8 \in \mathbb R^4$, and for every $\alpha_3, \dots, \alpha_8 \in \{-1, 1\}$ such that $\iz\iz^T + \bar \iz \bar \iz^T = \sum_{i=3}^8 \alpha_i \iv_i\iv_i^T$,
    \begin{equation}
    \label{eq:second_implication}
     2(|p(\iz)| + \RE{p(\iz)}) \le \left(\sum_{i=3}^8 \sqrt{p(\iv_i)}\right)^2.
  \end{equation}
  \end{itemize}
\end{theorem}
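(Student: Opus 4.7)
The result is essentially a repackaging of Blekherman's characterization of the hyperplanes separating $\SOS44$ from its complement inside $\PSD44$; the bulk of the plan is to assemble the ingredients already laid out in the discussion preceding the statement and to perform the final elimination of the scalars $a_i$ via \cref{lem:holder}. The forward direction of both implications is the easy one: if $p$ is sos, then inequality \cref{eq:sos_sepearating_hyperplane} holds for $q = p$ with every admissible choice of $\{a_i\}$ and $\{\iv_i\}$, and the two stated bounds are obtained simply by taking the supremum over $a_i$ through \cref{lem:holder}. So the substance lies in the converse.

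For the converse, I would argue by contrapositive and suppose $p \in \PSD44 \setminus \SOS44$. The first step is to invoke the separation result \cite[Theorem 1.2]{blekherman_nonnegative_2012}: there exist eight points $\iv_1, \ldots, \iv_8 \in \mathbb{C}^4$ and nonzero scalars $a_1, \ldots, a_8 \in \mathbb{C}$ such that the functional $\ell(q) \coloneqq \sum_i a_i q(\iv_i)$ is nonnegative on $\SOS44$ but strictly negative on $p$. Second, following \cite[Lemma 2.9]{blekherman_nonnegative_2012}, I would identify the $\iv_i$ as the (generically eight) common zeros of three linearly independent quadratic forms, which forces the unique Cayley-Bacharach relation $\sum_i \mu_i \iv_i \iv_i^T = 0$ with all $\mu_i \ne 0$. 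After rescaling the $\iv_i$ so that $|\mu_i| = 1$, this relation takes the form $\iv_1\iv_1^T = \sum_{i=2}^8 \alpha_i \iv_i \iv_i^T$ with $\alpha_i \in \{-1,1\}$ (in the all-real case), or $\iz\iz^T + \bar\iz\bar\iz^T = \sum_{i=3}^8 \alpha_i \iv_i \iv_i^T$ (in the case where $\iv_1 = \iz$, $\iv_2 = \bar\iz$ are the unique non-real pair, as justified by \cite[Corollary 4.4]{blekherman_nonnegative_2012}).

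Third, I would bring in the sign/normalization constraints on $\{a_i\}$ proved in \cite[Theorem 6.1 and Theorem 7.1]{blekherman_nonnegative_2012}, namely $\sum_i 1/a_i = 0$, together with the fact that exactly one $a_i$ is negative in the all-real case and exactly one conjugate pair is non-real in the other case. After normalizing $-1/a_1 = 1$ (resp.\ $-(1/a_1 + 1/\bar a_1) = 1$), inequality \cref{eq:sos_sepearating_hyperplane} rearranges into \cref{eq:real_hyperplane} or \cref{eq:complex_hyperplane}. Fourth and finally, to reach the form of \eqref{eq:first_implication} and \eqref{eq:second_implication}, I would observe that $p$ fails to be sos precisely when there \emph{exists} an admissible choice of positive $a_i$'s (resp.\ complex $a_1$) violating these rearranged inequalities; taking the supremum over those $a_i$'s and applying the two parts of \cref{lem:holder} converts the conditions into the clean square-root bounds \eqref{eq:first_implication} and \eqref{eq:second_implication}. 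Contrapositively, if both bounds hold, then no separating functional of Blekherman's form can cut off $p$, so $p$ is sos.

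\textbf{Main obstacle.} The cleanly stated hyperplane/Cayley-Bacharach machinery is nontrivial and would have to be cited wholesale from \cite{blekherman_nonnegative_2012}; the only genuinely new manipulation is the passage from the bilinear-in-$a_i$ inequalities \cref{eq:real_hyperplane} and \cref{eq:complex_hyperplane} to the stated square-root form, which is precisely the content of \cref{lem:holder}. The subtlety worth double-checking is that the supremum over $a_i$ in the complex case produces the term $2(|p(\iz)| + \RE{p(\iz)})$ rather than merely $|p(\iz)|$; this is exactly what the second half of \cref{lem:holder} delivers via the identity $\max\{az+\bar a\bar z : 1/a+1/\bar a = 1\} = 2(|z|+\RE{z})$.
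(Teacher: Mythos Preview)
Your proposal is correct and matches the paper's approach essentially step for step: both assemble Blekherman's separation theorem, the Cayley--Bacharach structure of the eight points, the sign/reciprocal constraints on the $a_i$, the real/complex case split, and then eliminate the $a_i$'s via \cref{lem:holder} to obtain the square-root form. The only addition in your write-up is that you state the (easy) forward direction explicitly, which the paper leaves implicit in its narrative.
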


\begin{figure}[h!]
  \centering
  \includegraphics[scale=0.1]{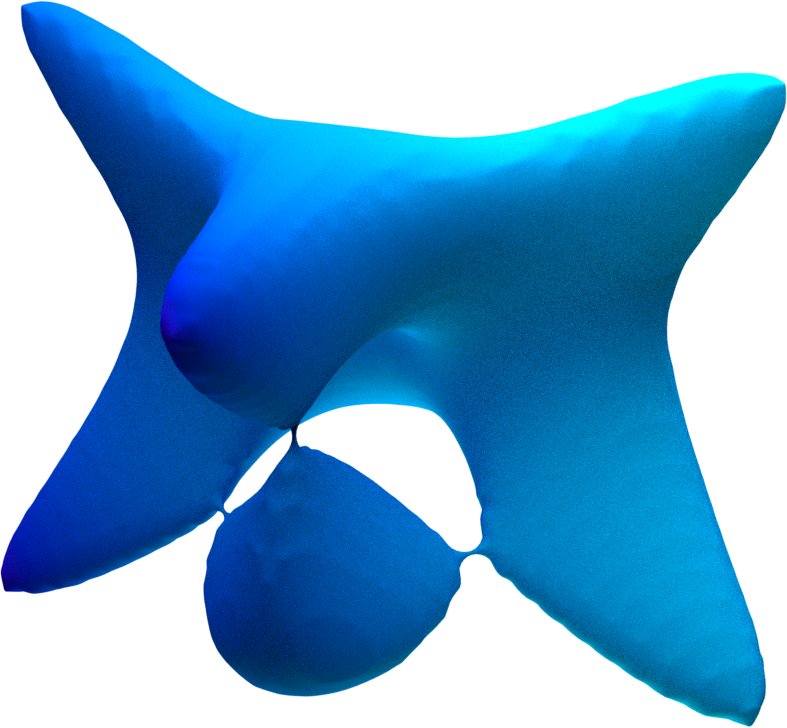}
  \caption{\label{fig:one_level_set}The $1\text{-level}$ set of the polynomial $p$ defined in \cref{eq:def-example-p}.}
\end{figure}

\begin{example}
  Let us use \cref{thm:caracterization-sos} to prove that the following quaternary quartic form  
  \begin{equation}\label{eq:def-example-p}
    p(\ix) = \sum_{i=1}^4 x_i^4 + \sum_{\substack{1 \le i,j,k \le 4\\i\ne j, i \ne k, j\ne k}} x_i^2x_jx_k + 4 x_1x_2x_3x_4,  
  \end{equation}
  whose $1\text{-level}$ set is plotted in \cref{fig:one_level_set}, is not sos. % and nonnegative
  Take $V$ to be the set of 8 elements given by
  \[V \coloneqq \{-1, 1\} \times \{-1, 1\} \times \{-1, 1\} \times \{1\},\]
  and partition it as   $V = V^+ \cup V^-$,  where $V^+$ (resp. $V^-$) is the subset of elements $V$ whose entries sum to an even (resp. odd) number. Up to scaling, the unique linear relationship satisfied by the elements of $V$ is given by
  \[\sum_{\iv \in V^+}  \iv\iv^T - \sum_{\iv \in V^-}  \iv\iv^T = 0.\]
  Let $\iv_1 \in V$ stand for the vector $(1, 1, 1, 1)^T$, and denote the rest of the elements of $V$ by $\iv_2, \dots, \iv_8$. It is easy to check that
  \[p(\iv_1) = 32 \text{ and } p(\iv_i) = 0 \text{ for } i=2,\dots,8,\]
  and therefore $p$ does not satisfy requirement \cref{eq:first_implication} in \cref{thm:caracterization-sos}, and as result, $p$ is not sos as a result.
\end{example}

\section{Proof of the Main Theorem}
\label{sec:proof_main_theorem}
In this section we prove that $\CONVEX{4}{4} \subseteq \SOS{4}{4}$. Our plan of action is to show that any quaternary quartic form $p$ that satisfies the following generalized inequality:
\begin{equation}
Q_p(\ix, \iy) \le \sqrt{p(\ix) p(\iy)} \quad \forall \ix, \iy \in \mathbb R^n\label{eq:cs-quartic-1}
\end{equation}
and
\begin{equation}
|p(\iz)| \le Q_p(\iz, \bar \iz) \quad \forall \iz \in \mathbb C^n,\label{eq:cs-quartic-2}
\end{equation}
must satisfy the requirements \cref{eq:first_implication} and \cref{eq:second_implication} that appear in \cref{thm:caracterization-sos}, and hence must be sos. The containment $\CONVEX{4}{4} \subseteq \SOS{4}{4}$ follows since convex quaternary quartics satisfy the generalized Cauchy-Schwarz inequalities with constants $A_2^* = B_2^* = 1$ by \cref{thm:optimal_cs_constants}.

Let $p$ be a quaternary quartic form satisfying the inequalities in \cref{eq:cs-quartic-1,eq:cs-quartic-2}, and let us prove that $p$ satisfies both requirements appearing in \cref{thm:caracterization-sos}. 

{\bf The first requirement in \eqref{eq:first_implication}.}
Let $\iv_1, \dots, \iv_8 \in \mathbb R^4$ and $\alpha_2, \dots, \alpha_8 \in \{-1, 1\}$ such that $\iv_1\iv_1^T = \sum_{i=2}^8 \alpha_i \iv_i\iv_i^T$. Using the tensor notation developed in \cref{sec:tensors}, this is equivalent to $\iv_1^2 = \sum_{i=2}^8 \alpha_i \iv_i^2$.
Squaring\footnote{The square of a vector $\iv$ is simply the outer product of the vector with itself.} both sides of this equation leads to
\[\iv_1^{4} = \sum_{i=2}^8 \alpha_i\alpha_j \iv_i^{2} \iv_j^{2}.\]
Recall that the biform $(\ix, \iy) \mapsto Q_p(\ix, \iy)$  defined in \cref{eq:biform} can be seen as a linear function of the symmetric outer product $\ix^2\iy^2$. We conclude  that
\[p(\iv_1) = \sum_{2 \le i, j \le 8} \alpha_i \alpha_j Q_p(\iv_i, \iv_j).\]
Using \cref{eq:cs-quartic-1}, we know that $|Q_p(\iv_i,\iv_j)| \le \sqrt{p(\iv_i)p(\iv_j)}$, and therefore
\[p(\iv_1) \le \sum_{2 \le i, j \le 8}\sqrt{p(\iv_i)p(\iv_j)}.\]

{\bf The second requirement in \eqref{eq:second_implication}.}
Let $\iv_3, \dots, \iv_8 \in \mathbb R^4$, $\alpha_3, \dots, \alpha_8 \in \{1, -1\}$ and $\iz \in \mathbb C^4$ such that
$\iz\iz^T + \bar \iz \bar \iz^T = \sum_{i=3}^8 \alpha_i \iv_i\iv_i^T$.
Squaring both side of the equation and applying the biform $Q_p$ as before gives:
\[\sum_{3 \le i,j \le 8} \alpha_i \alpha_j Q_p(\iv_i, \iv_j) = p(\iz) + p(\bar \iz) + 2 Q_p(\iz, \bar \iz) = 2 \RE{p(\iz)} + 2 Q(\iz, \bar \iz).\]
On the one hand, using \cref{eq:cs-quartic-2}, we know that $|p(\iz)| \le Q(\iz,\bar \iz)$, so
\[ 2(\RE{p(\iz)} + |p(\iz)| ) \le 2 \RE{p(\iz)} + 2 Q(\iz,\bar \iz).\]
On the other hand, by \cref{eq:cs-quartic-1},
\[\sum_{3 \le i, j \le 8} \alpha_i \alpha_j Q_p(\iv_i, \iv_j) \le \sum_{3 \le i, j \le 8} \sqrt{p(\iv_i)p(\iv_j)}.\]
In conclusion, $2(|p(\iz)| + \RE{p(\iz)}) \le \sum_{i=3}^8  \sqrt{p(\iv_i)p(\iv_j)}.$

\section{Remarks on the Case of Ternary Sextics}
\label{sec:generalization}
It is natural to ask whether our proof can be extended to show that convex ternary sextics are also sos. \cref{thm:caracterization-sos} for instance generalizes in a straightforward fashion.
\begin{theorem}[\cite{blekherman_nonnegative_2012}]\label{thm:charaterization_sos_ternary_sextics}
  A nonnegative ternary sextic form $p$ is sos if and only if both of the following conditions hold.
  \begin{itemize}
  \item For every $\iv_1, \dots, \iv_9 \in \mathbb R^3$ and $\alpha_2, \dots, \alpha_9 \in \{-1, 1\}$ such that $\iv_1^{ 3} = \sum_{i=2}^9 \alpha_i \iv_i^{ 3}$,
    \begin{equation}
    p(\iv_1) \le \left(\sum_{i=2}^9  \sqrt{p(\iv_i)}\right)^2.
    \end{equation}
  \item For every $\iz \in \mathbb C^3$, for every $\iv_3, \dots, \iv_9 \in \mathbb R^3$, and for every $\alpha_3, \dots, \alpha_9 \in \{-1, 1\}$ such that $\iz^{ 3} + \bar \iz^{ 3} = \sum_{i=3}^9 \alpha_i \iv_i^{ 3}$,
    \begin{equation}
    2 (|p(\iz)| + \RE{p(\iz)}) \le \left(\sum_{i=3}^9 \sqrt{p(\iv_i)}\right)^2.
  \end{equation}
  \end{itemize}
\end{theorem}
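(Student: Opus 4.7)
The plan is to mirror the argument used for Theorem \ref{thm:caracterization-sos}, substituting the parameters that correspond to the ternary sextic setting. Recall that the key inputs in the quaternary quartic case were: (i) the description of faces of $\SOS{4}{4}$ via common zero sets of linearly independent quadratic forms, (ii) the Cayley-Bacharach relation among the rank-one symmetric tensors $\iv_i \iv_i^T$, and (iii) the identity $\sum 1/a_i = 0$ on the dual coefficients. For ternary sextics, the corresponding ambient space for ``square roots'' is $\FORMS{3}{3}$, which has dimension $10$. Three linearly independent cubic forms $q_1,q_2,q_3 \in \FORMS{3}{3}$ generically cut out $9$ common zeros in $\mathbb{CP}^2$ by B\'ezout's theorem, producing the set $V = \{\iv_1,\dots,\iv_9\}$. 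I would first invoke the separation result in \cite{blekherman_nonnegative_2012}, which characterizes faces of $\SOS{3}{6}$ inside $\PSD{3}{6}$ via point evaluations at such zero sets; if $p \in \PSD{3}{6} \setminus \SOS{3}{6}$, there must exist such a set $V$ and nonzero scalars $a_1,\dots,a_9$ with $\sum_i a_i q(\iv_i) \ge 0$ for every $q \in \SOS{3}{6}$ but $\sum_i a_i p(\iv_i) < 0$.

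Next I would establish the Cayley-Bacharach relation among the cubic tensors $\iv_i^{3}$. The nine tensors $\iv_1^{3},\dots,\iv_9^{3}$ live in the $10$-dimensional space of symmetric $3$-tensors on $\R^3$ and are all orthogonal (in the Fischer inner product) to the three-dimensional subspace spanned by $q_1,q_2,q_3$. A dimension count forces a linear relation $\sum_{i=1}^9 \mu_i \iv_i^{3} = 0$; classical Cayley-Bacharach (applied in the guise used by Blekherman) guarantees that this relation is unique up to scaling and that all $\mu_i$ are nonzero, which I would normalize so that $|\mu_i| = 1$. Then, as in the quaternary quartic case, the dual feasibility conditions for the separating functional translate into $\sum_i 1/a_i = 0$. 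Splitting into the all-real case ($V \subset \R^3$, where exactly one $a_i$ has opposite sign and the rest are positive real) and the complex-conjugate-pair case ($\iv_1 = \iz$, $\iv_2 = \bar\iz$, $a_1 = \bar a_2$, and the remaining $\iv_i,a_i$ real), the two separating inequalities take respectively the forms
\[
p(\iv_1) \le \sum_{i=2}^9 a_i p(\iv_i) \quad\text{and}\quad a_1 p(\iz) + \bar a_1 p(\bar\iz) + \sum_{i=3}^9 a_i p(\iv_i) \ge 0,
\]
exactly as in the $(4,4)$ case but with one extra index.

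Finally, I would apply Lemma \ref{lem:holder} to eliminate the free scalars $a_i$. Maximizing over the admissible $a_i$ in the real case turns the separating inequality into $p(\iv_1) \le \left(\sum_{i=2}^9 \sqrt{p(\iv_i)}\right)^2$, and maximizing in the complex case turns it into $2(|p(\iz)|+\RE{p(\iz)}) \le \left(\sum_{i=3}^9 \sqrt{p(\iv_i)}\right)^2$. These are exactly the two bulleted conditions of the theorem. Since the Cayley-Bacharach relation $\sum \mu_i \iv_i^{3} = 0$ with $|\mu_i|=1$ is equivalent to $\iv_1^{3} = \sum_{i=2}^9 \alpha_i \iv_i^{3}$ with $\alpha_i \in \{-1,1\}$ (and analogously in the complex case with $\iz^{3}+\bar\iz^{3}$ on the left), the resulting characterization matches the theorem statement.

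The main obstacle I anticipate is justifying that every face of $\SOS{3}{6}$ exposed in $\PSD{3}{6}$ is defined by point evaluations at a complete intersection of three cubics of the predicted form — this is the sextic analog of Blekherman's structure theorem, and the step where the Gorenstein / Cayley-Bacharach machinery is genuinely used. The dimension count ($9$ points in $\mathbb{CP}^2$ versus $10$-dimensional cubic space, leaving a unique relation) is the classical statement that guarantees uniqueness of the Cayley-Bacharach relation and forces $\mu_i \ne 0$. Once the separating-hyperplane description and the reciprocal-sum condition $\sum 1/a_i = 0$ are in hand, the remainder of the argument is a routine transcription of the proof of Theorem \ref{thm:caracterization-sos}.
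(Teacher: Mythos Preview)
The paper does not actually prove this theorem: it is stated with a citation to \cite{blekherman_nonnegative_2012} and the remark that Theorem~\ref{thm:caracterization-sos} ``generalizes in a straightforward fashion.'' Your plan to transcribe the argument of Section~\ref{sec:sos_vs_psd} with parameters adjusted to the ternary sextic setting is exactly the intended route, and the final step (applying Lemma~\ref{lem:holder} to eliminate the $a_i$) goes through verbatim.

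There is, however, a concrete error in your parameter count. In $\mathbb{P}^2$ a complete intersection is cut out by \emph{two} curves, not three: two cubics meet in $3\cdot 3 = 9$ points by B\'ezout, whereas three generic cubics in the plane have empty common zero locus. Accordingly, the nine tensors $\iv_i^{3}$ are orthogonal to the \emph{two}-dimensional pencil of cubics through $V$ (this is the classical Cayley--Bacharach theorem for plane cubics), so they span at most an $8$-dimensional subspace of the $10$-dimensional space $\FORMS{3}{3}$, forcing exactly one linear relation. Your stated count (three cubics, three-dimensional orthogonal subspace) would leave the $\iv_i^{3}$ in a $7$-dimensional space and force at least two independent relations, contradicting the uniqueness you need. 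Once you replace ``three'' by ``two'' throughout, the dimension count and the rest of your sketch are correct.
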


In order for us to follow the same proof strategy that applies to quaternary quartics to the set of ternary sextics, we would take an arbitrary convex ternary sextic and try to show that it satisfies both requirements appearing in the previous Theorem. The first requirement is easily dealt with since sextics satisfy the generalized Cauchy-Schwarz inequality appearing in \cref{eq:real_generalized_cauchy_schwarz} with a constant $A_3^*$ equal to 1 (similar to the quartics case). Sextics on the other hand satisfy \cref{eq:complex_generalized_cauchy_schwarz} only with a constant $B_3^*$ strictly larger than 1 (as opposed to $B_2^*=1$ for quartics). This proves to be the main obstacle preventing us from showing that convex ternary sextics satisfy the second requirement in \cref{thm:charaterization_sos_ternary_sextics}. In \cite[Conjecture 7.3]{blekherman_nonnegative_2012}, the author conjectures that this second requirement is actually not needed, in which case our proof strategy would succeed.

% For higher degrees $2d$ or number of variables $n$ where $\CONVEX{n}{2d}\subset \SOS{n}{2d}$, we do not at the moment have an equivalent of \cref{thm:caracterization-sos} that would allow us to follow the same proof strategy that applies to quaternary quartics.

\section*{Acknowledgments}
The author is grateful to Amir Ali Ahmadi, Yair Shenfeld, and Ramon van Handel for insightful questions, comments and relevant feedback that improved this draft considerably.

\bibliographystyle{siamplain}
\bibliography{citations}

\begin{thebibliography}{10}

\bibitem{ahmadi2013complete}
{\sc A.~A. Ahmadi and P.~A. Parrilo}, {\em A complete characterization of the
  gap between convexity and sos-convexity}, SIAM Journal on Optimization, 23
  (2013), pp.~811--833.

\bibitem{parrilo_exact_sdp_2003}
{\sc H.~Anai and P.~A. Parrilo}, {\em Convex quantifier elimination for
  semidefinite programming}, in Proceedings of the International Workshop on
  Computer Algebra in Scientific Computing, CASC, 2003.

\bibitem{lasserre_handbook_sdps_2011}
{\sc M.~F. Anjos and J.~B. Lasserre}, {\em Handbook on semidefinite, conic and
  polynomial optimization}, vol.~166, Springer Science \& Business Media, 2011.

\bibitem{mosek}
{\sc M.~ApS}, {\em The MOSEK optimization toolbox for MATLAB manual. Version
  9.0.}, 2019, \url{http://docs.mosek.com/9.0/toolbox/index.html}.

\bibitem{integration_hom_sphere_1997}
{\sc J.~A. Baker}, {\em Integration over spheres and the divergence theorem for
  balls}, The {A}merican {M}athematical {M}onthly, 104 (1997), pp.~36--47.

\bibitem{blekherman_convex_not_sos_2009}
{\sc G.~Blekherman}, {\em Convex forms that are not sums of squares}, arXiv
  preprint arXiv:0910.0656,  (2009).

\bibitem{blekherman_nonnegative_2012}
{\sc G.~Blekherman}, {\em Nonnegative polynomials and sums of squares}, J.
  Amer. Math. Soc., 25 (2012), pp.~617--635.

\bibitem{ehrenborg_apolarity_1993}
{\sc R.~Ehrenborg and G.-C. Rota}, {\em Apolarity and canonical forms for
  homogeneous polynomials}, European Journal of Combinatorics, 14 (1993),
  pp.~157--181.

\bibitem{bacharach_eisenbud_1996}
{\sc D.~Eisenbud, M.~Green, and J.~Harris}, {\em Cayley-{B}acharach theorems
  and conjectures}, Bulletin of the American Mathematical Society, 33 (1996),
  pp.~295--324.

\bibitem{sage_2010}
{\sc B.~Er{\"o}cal and W.~Stein}, {\em The {S}age project: Unifying free
  mathematical software to create a viable alternative to {M}agma, {M}aple,
  {M}athematica and {MATLAB}}, in International Congress on Mathematical
  Software, Springer, 2010, pp.~12--27.

\bibitem{diff_innner_product_fischer_1918}
{\sc E.~Fischer}, {\em {\"U}ber die {D}ifferentiationsprozesse der {A}lgebra.},
  Journal f{\"u}r die reine und angewandte Mathematik, 148 (1918), pp.~1--78.

\bibitem{parrilo_symmetry_2004}
{\sc K.~Gatermann and P.~A. Parrilo}, {\em Symmetry groups, semidefinite
  programs, and sums of squares}, Journal of Pure and Applied Algebra, 192
  (2004), pp.~95--128.

\bibitem{helton_sosconvexity_2010}
{\sc J.~W. Helton and J.~Nie}, {\em Semidefinite representation of convex
  sets}, Mathematical Programming, 122 (2010), pp.~21--64.

\bibitem{henrion_exact_sdp_2016}
{\sc D.~Henrion, S.~Naldi, and M.~S. El~Din}, {\em Exact algorithms for linear
  matrix inequalities}, SIAM Journal on Optimization, 26 (2016),
  pp.~2512--2539.

\bibitem{hilbert_1888}
{\sc D.~Hilbert}, {\em {\"U}ber die {D}arstellung {D}efiniter {F}ormen als
  {S}umme von {F}ormenquadraten}, Mathematische Annalen, 32 (1888),
  pp.~342--350.

\bibitem{catalan_hilton_1991}
{\sc P.~Hilton and J.~Pedersen}, {\em Catalan numbers, their generalization,
  and their uses}, The Mathematical Intelligencer, 13 (1991), pp.~64--75.

\bibitem{opt_hom_lasserre_2002}
{\sc J.~Lasserre and J.~Hiriart-Urruty}, {\em Mathematical properties of
  optimization problems defined by positively homogeneous functions}, Journal
  of Optimization Theory and Applications, 112 (2002), pp.~31--52.

\bibitem{arith_geom_ineq_motzkin_1967}
{\sc T.~Motzkin}, {\em The arithmetic-geometric inequality}, In Proceedings of
  Symposium on Inequalities,  (1967), pp.~205--224.

\bibitem{parrilo_sos_relaxation_2003}
{\sc P.~A. Parrilo}, {\em Semidefinite programming relaxations for
  semialgebraic problems}, Mathematical Programming, 96 (2003), pp.~293--320.

\bibitem{parrilo_minimizing_convex_2003}
{\sc P.~A. Parrilo and B.~Sturmfels}, {\em Minimizing polynomial functions},
  Algorithmic and {Q}uantitative {R}eal {A}lgebraic {G}eometry, DIMACS Series
  in Discrete Mathematics and Theoretical Computer Science, 60 (2003),
  pp.~83--99.

\bibitem{reznick2000some}
{\sc B.~Reznick}, {\em Some concrete aspects of {H}ilbert's 17th problem},
  Contemporary Mathematics, 253 (2000), pp.~251--272.

\bibitem{reznick_hilberts_2007}
{\sc B.~Reznick}, {\em On {Hilbert}'s construction of positive polynomials},
  arXiv preprint arXiv:0707.2156,  (2007).

\bibitem{reznick_blenders_2011}
{\sc B.~Reznick}, {\em Blenders}, in Notions of Positivity and the Geometry of
  Polynomials, Springer, 2011, pp.~345--373.

\bibitem{nonnegative_not_sos_robinson_1969}
{\sc R.~M. Robinson}, {\em Some definite polynomials which are not sums of
  squares of real polynomials}, in Notices of the American Mathematical
  Society, vol.~16, 1969, p.~554.

\end{thebibliography}

\appendix
\section{Proof of identity \cref{eq:Qp_integral}}
\label{sec:Qp_integral}
Let $q$ be a $2d\text{-degree}$ form in $n$ variables and let $\ix, \iy$ be two vectors in $\R^n$. By considering the restriction $(x, y) \mapsto q(x\ix + y\iy)$ of the form $q$ to the plane spanned by $\ix$ and $\iy$ if necessary, we can assume without loss of generality that $n = 2$, $\ix = \ie1$, and $\iy = \ie2$.
% In this case, the ellipse $\mathcal E(\ix, \iy)$ is simply the unit disk in the two dimensional plane,  that we denote by $\mathcal D$.
As a consequence, it suffices to prove that the identity
\[Q_q(\ie1 + i\ie2, \ie1 - i\ie2) = \frac{4^d(d+1)}\pi {2d \choose d}^{-1} \iint_{x^2+y^2 \le 1} q(x, y)  \; {\rm d}x {\rm d}y\]
holds for all bivariate convex forms $q$ of degree $2d$. This identity will follow from the following lemma.

\begin{lemma}\label{lem:integral_laplacian}
  For $k \in \N$, any form $p$ in $\FORMS{2}{k}$ satisfies
  \[\iint_{x^2+y^2 \le 1} \Delta p(x, y) \; {\rm d}x {\rm d}y = k (k+2) \iint_{x^2+y^2 \le 1} p(x, y) \; {\rm d}x {\rm d}y.\]
\end{lemma}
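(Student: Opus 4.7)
The strategy is to convert the integrals to polar coordinates and exploit both the homogeneity of $p$ and the polar-coordinate expression for the Laplacian recorded in \cref{lem:diff_in_polar_coordinates}. Since $p \in \FORMS{2}{k}$, we may write $p(x,y) = r^k f(\theta)$ in polar coordinates $(r,\theta)$, where $f$ is a trigonometric polynomial in $\cos\theta, \sin\theta$ (hence smooth and $2\pi$-periodic). By \cref{lem:diff_in_polar_coordinates}, the Laplacian admits the explicit form $\Delta p(x,y) = r^{k-2}\bigl(k^2 f(\theta) + f''(\theta)\bigr)$.

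With the Jacobian $\mathrm{d}x\,\mathrm{d}y = r\,\mathrm{d}r\,\mathrm{d}\theta$, the two integrals factor as products of a radial integral and an angular integral. First I would compute
\[\iint_{x^2+y^2\le 1} p(x,y)\,\mathrm{d}x\,\mathrm{d}y \;=\; \Bigl(\int_0^1 r^{k+1}\,\mathrm{d}r\Bigr)\Bigl(\int_0^{2\pi} f(\theta)\,\mathrm{d}\theta\Bigr) \;=\; \frac{1}{k+2}\int_0^{2\pi} f(\theta)\,\mathrm{d}\theta.\]
Then, assuming $k\ge 1$, compute
\[\iint_{x^2+y^2\le 1}\Delta p(x,y)\,\mathrm{d}x\,\mathrm{d}y \;=\; \Bigl(\int_0^1 r^{k-1}\,\mathrm{d}r\Bigr)\Bigl(\int_0^{2\pi}\bigl(k^2 f(\theta)+f''(\theta)\bigr)\,\mathrm{d}\theta\Bigr).\]
The key observation is that $\int_0^{2\pi} f''(\theta)\,\mathrm{d}\theta = 0$ by periodicity of $f$, so the angular factor collapses to $k^2 \int_0^{2\pi} f(\theta)\,\mathrm{d}\theta$ and the radial factor equals $1/k$. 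Dividing the two expressions yields the claimed factor $k(k+2)$.

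The only routine caveats are the low-degree cases: when $k=0$ the form $p$ is constant so $\Delta p \equiv 0$ and the identity is trivial ($k(k+2)=0$); when $k=1$ the radial integral $\int_0^1 r^{k-1}\mathrm{d}r = 1$ still makes sense, and $\int_0^{2\pi} f(\theta)\,\mathrm{d}\theta = 0$ for a linear form on the symmetric disk, so both sides vanish. There is no real obstacle here: the polar-coordinate formula for $\Delta$ from the earlier lemma reduces the statement to a one-variable integration by parts (effectively a boundary evaluation of $f'$), and the dimensional constants come out automatically from the Jacobian.
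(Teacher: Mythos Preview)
Your proof is correct but takes a genuinely different route from the paper. The paper argues via the divergence theorem, obtaining $\iint_{\mathcal D}\Delta p = \oint_{\partial\mathcal D}(x,y)^T\nabla p$, then invokes Euler's identity to rewrite the boundary integrand as $kp$, and finally cites an external result relating the circle integral of a $k$-homogeneous function to its disk integral to pick up the factor $k+2$. You instead bypass all of this by plugging in the polar-coordinate expression for $\Delta p$ from \cref{lem:diff_in_polar_coordinates} and integrating directly; the factor $k(k+2)$ drops out of the two radial integrals, and the only ``trick'' is that $\int_0^{2\pi}f''=0$ by periodicity. Your argument is more elementary and fully self-contained within the paper (no external citation needed), while the paper's version is slightly more conceptual and makes the role of Euler's identity explicit. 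Either is perfectly adequate for this auxiliary lemma. One minor remark: since the paper takes $\N$ to start at $1$, your $k=0$ case is not actually needed, though it does no harm.
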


Indeed, using this lemma inductively on the iterates $q, \Delta q, \dots, \Delta^{d-1} q$, we get
\[\iint_{x^2+y^2 \le 1} \Delta^d q(x, y) \; {\rm d}x {\rm d}y = 4^d (d+1) d!^2 \iint_{x^2+y^2 \le 1} q(x, y) \; {\rm d}x {\rm d}y.\]
Since $\Delta^dq$ is a constant and the area of the unit disk is $\pi$, we get that
\[\Delta^d q = \frac{4^d(d+1) d!^2}{\pi}\iint_{x^2+y^2\le 1} q(x, y) \; {\rm d}x {\rm d}y.\]
Recall from \cref{sec:inner_product} that $Q_q(\ie1 + i\ie2, \ie1 - i\ie2) = \frac{1}{(2d)!}\Delta^d q$, and therefore
\[Q_q(\ie1 + i\ie2, \ie1 - i\ie2) = \frac{4^d(d+1)}\pi {2d \choose d}^{-1} \; \iint_{x^2+y^2\le 1} q(x, y) \; {\rm d}x {\rm d}y,\]
which concludes the proof.

\begin{proof}[Proof of \cref{lem:integral_laplacian}]
  Fix $k \in \N$ and a form $p \in \FORMS{2}{k}$. Denote by ${\mathcal D}$ (resp. $\partial {\mathcal D}$) the unit disk (resp. unit circle).
  The well-known divergence theorem states that
  \[\iint_{\mathcal D} \Delta p(x, y) \; {\rm d}x {\rm d}y = \oint_{\partial {\mathcal D}}  \begin{pmatrix}x\\y\end{pmatrix}^T \nabla p(x, y),\]
where $\oint_{\partial {\mathcal D}}$ stands for the line integral over $\partial {\mathcal D}$. Euler's identity shows that the integrand on the right-hand side of the previous equation is $k p(x, y)$, and therefore
\[\iint_{\mathcal D} \Delta p(x, y) \; {\rm d}x {\rm d}y = k \oint_{\partial {\mathcal D}}  p(x, y).\]
Exploiting the fact that the function $p$ is homogeneous of degree $k$ again to relate the integral on ${\mathcal D}$ to the line integral over $\partial {\mathcal D}$ (see \cite[Corollary 1]{integration_hom_sphere_1997}) leads to
\[\oint_{\partial {\mathcal D}}  p(x, y) = (k+2) \iint_{{\mathcal D}}  p(x, y) \; {\rm d}x {\rm d}y,\]
which concludes the proof.
\end{proof}

\section{Proof that the constant $A_d^*$ defined in \eqref{eq:Ad_Bd_star} is larger than $1$ for all even integers $d \ge 4$}
\label{sec:A_d_even}
In this section, we will show that for all even integers $d \ge 4$, there exists a convex bivariate form $p_d$ of degree $2d$ that satisfies $p_d(1, 0) = p_d(0, 1) = 1$ and $Q_{p_d}(\ie1, \ie2) > 1$. This shows that $A_d^* > 1$.

Fix an integer $d \ge 4$ and let $p_d \coloneqq s + \alpha_d \; q$, where
\[s(x, y) \coloneqq  \frac{(x+y)^{2d} + (x-y)^{2d}}2, q(x,y) \coloneqq \sum_{k=1}^{d-1} x^{2k}y^{2d-2k},\]
and $\alpha_d$ is a positive constants defined explicitly in \cref{eq:alpha}. Note that $p_d(1, 0) = p_d(1, 0) = 1$ and $Q_{p_d}(\ie1, \ie2) = 1 + \frac{\alpha_d}{{2d \choose d}} > 1$. 

It remains to prove that the form $p_d$ is convex. The idea of the proof is as follows.
On the one hand, the Hessian of the form $s$ is positive definite everywhere except on the two lines $y = \pm x$, where it is only positive semidefinite. On the other hand, the Hessian of the form $q$ is positive definite on the two lines $y = \pm x$.  By picking $\alpha_d$ to be small enough, we can therefore make the form $p_d$ convex.

More formally, by homogeneity, it suffices to prove that the Hessian of $p$ is positive semidefinite on the circle $\mathcal S \coloneqq \{ (x,y) \in \R^2 \;|\;x^2 +y^2 = 2\}$. Let us now examine the Hessians of the forms $s$ and $q$ individually. The Hessian of $s$ is given by
$$\nabla^2 s(x, y)=d(2d-1)\begin{pmatrix}1&-1\\1&1\end{pmatrix}  \begin{pmatrix}(x+y)^{2d-2}&0\\0&(x-y)^{2d-2}\end{pmatrix} \begin{pmatrix}1&1\\1&-1\end{pmatrix}.$$
The matrix $\nabla^2 s(x, y)$ is positive definite for every $(x, y) \in \mathcal S$ except on the four points $X \coloneqq \{(\pm 1, \pm 1)\}$ where it is only positive semidefinite. We will now prove that the Hessian of $q$ is positive definite on $X$. A simple computation shows that
  \[\nabla^2 q(1, 1) =  \nabla^2 q(-1, -1) = \frac{d(d-1)}3 \begin{pmatrix}4d-5&2d+2\\2d+2&4d-5\end{pmatrix},\]
  \[\nabla^2 q(1, -1) = \nabla^2 q(-1, 1) = \frac{d(d-1)}3 \begin{pmatrix}4d-5&-2d-2\\-2d-2&4d-5\end{pmatrix}.\]
  By examining the trace and the determinant of these matrices (which are univariate polynomials in the variable $d$), we see that they are positive definite if and only if $d \ge \frac72$.
  Let us now partition the circle $\mathcal S$ as
  \[\mathcal S = U \cup (S \setminus  U),\]
  where $U$ is any open subset of $\mathcal S$ containing $X$ on which the matrix $\nabla^2 q$ is positive definite. If we take
  \begin{equation}
  \alpha_d \coloneqq \min_{\|\iu\| = 1, (x, y) \in S \setminus U}\quad \frac{\iu^T\nabla^2s(x, y)\iu}{|\iu^T\nabla^2 q(x, y)\iu|} > 0,\label{eq:alpha}  
\end{equation}
then the Hessian of the form $p_d \coloneqq s + \alpha_d q$ is positive semidefinite on $\mathcal S$, and the form $p_d$ itself is therefore convex.

\section{Simplifying the expression $\sum_{j=0}^{d-1} \sin^2\left(\frac{j\pi}{d}-\theta\right) \cos^{2d-2}\left(\frac{j\pi}{d}-\theta\right)$}\\
\label{app:trigo}
Fix $d \in \N$ and $\theta \in \R$. To simplify notation, let
\[f_d(\theta) \coloneqq \sum_{j=0}^{d-1} \sin^2\left(\frac{j\pi}{d}-\theta\right) \cos^{2d-2}\left(\frac{j\pi}{d}-\theta\right).\]
For $j \in \mathbb N$, let $r_j \coloneqq e^{-i \frac{j \pi}{d}}$. Using the fact that
\[\cos\left( \frac{j\pi}{d}-\theta\right) = \frac{e^{i \theta} r_j + e^{-i \theta}\bar{r_j}}2 \text{ and }\sin\left(\frac{j\pi}{d}-\theta\right) = \frac{e^{i \theta}r_j - e^{-i \theta}\bar{r_j}}{2i},\]
we get that
\begin{align*}
  f_d(\theta) &= -\frac{1}{2^{2d}} \sum_{j=0}^{d-1} \left(e^{i \theta}r_j - e^{-i \theta}\bar{r_j}\right)^2  \left(e^{i \theta}r_j + e^{-i \theta}\bar{r_j}\right)^{2d-2}.
\end{align*}

By expanding and exchanging the order of the summation, we get
\[f_d(\theta) = -\frac{1}{2^{2d}} \sum_{h=0}^{2d-2} {2d-2\choose h}  \left( (e^{i\theta})^{2h}\sum_{j=0}^{d-1} r_j^{2h}
  +  (e^{i\theta})^{2h-4}\sum_{j=0}^{d-1} r_j^{2h-4} 
  -2  (e^{i\theta})^{2h-2} \sum_{j=0}^{d-1} r_j^{2h-2}  \right).\]
We now use the following simple fact about the sum of the $k^{\text{th}}$ powers of roots of unity:
\[\forall k \in \mathbb N \quad \sum_{j=0}^{d-1} r_j^{2k}  = \left\{ \begin{array}{ll}d & \text{if } d \text{ divides }  k \\ 0& \text{otherwise,}\end{array}\right. \]
to get 
\begin{align*}
  f_d(\theta)
  &= -\frac{d}{2^{2d}} \sum_{h=0}^{2d-2} {2d-2\choose h} \left( e^{2i \theta h} 1_{\{d \; | \; h\}} + e^{ 2i (h-2) \theta }  1_{\{d  \; | \;  h-2\}} - 2 e^{2i(h-1)\theta}  1_{\{d \; | \; h-1\}}\right),
\end{align*}
and therefore
\[f_d(\theta)= \frac{2d}{2^{2d}}  \left(\frac {{2d-2\choose d-1}}d - \cos(2d\theta)\right).\]

\section{Hessian and Laplacian in polar coordinates}\label{sec:proof_hessian_laplacian_polar}
  \begin{proof}[Proof of \cref{lem:diff_in_polar_coordinates}]
    Fix a positive integer $k$ a bivariate form $p \in
    \FORMS{2}{k}$. Let us switch from cartesian coordiantes $(x, y)$
    to polar coordinates $(r, \theta)$ defined by
    \(x = r\cos(\theta)\) and \(y =r\sin(\theta)\) and write
    $p(x, y) \eqqcolon r^k f(\theta)$, for some twice-differentiable
    function $f$.  Recall that the gradient operator $\nabla$ can be
    written in polar coordinates as follows
  \[\nabla = \frac{\partial}{\partial r} \ie{r} + \frac 1r
  \frac{\partial}{\partial \theta} \ie\theta, \text{ where } \ie{r}
  \coloneqq \begin{pmatrix}\cos(\theta)\\\sin(\theta)\end{pmatrix}
  \text{ and } \ie{\theta}
  \coloneqq \begin{pmatrix}-\sin(\theta)\\\cos(\theta)\end{pmatrix}.\]
  The Hessian operator $\nabla^2 = \nabla \cdot \nabla^T$ is thus given by
  \[\nabla^2 = \frac{\partial^2}{\partial r^2} \ie{{rr}} + \frac{\partial}{\partial r}\left(\frac 1r \frac{\partial}{\partial \theta}\right)  \ie {r\theta} + \left(\frac1r \frac{\partial}{\partial r} + \frac 1{r^2} \frac{\partial^2}{\partial \theta^2} \right) \ie{\theta\theta},\]
  where $\ie{rr} = {\ie r} {\ie r}^T$, $\ie{r\theta} = {\ie r} {\ie\theta}^T +  {\ie\theta}{\ie r}^T$ and $\ie{\theta\theta} = {\ie \theta} {\ie\theta}^T$.
  Note that taking the derivative of a form of degree $k' \ge 1$ with respect to $r$ is equivalent to multiplying by $\frac{k'}r$.
  The Hessian operator, when applied to the $k\text{-degree}$ form $p$, can thus be simplified further to
  \[\nabla^2p(x,y) = r^{k-2} \; \left( k(k-1) f(\theta) \ie{rr} + (k-1)
    f'(\theta) \ie {r\theta} + \left(k +
      f''(\theta) \right) \ie {\theta \theta
    }\right).\]
  The Laplacian $\Delta p$ is given by the trace of the matrix
  $\nabla^2p$. Since the trace of both matrices $\ie {r r}$ and $\ie {\theta \theta}$ is one and the trace of $\ie {r \theta}$ is zero, we get that
  \[\Delta p(x, y) = r^{k-2}\; \left( k^2 f(\theta) + f''(\theta)\right).\]
\end{proof}

\section{Proof of \cref{lem:holder}}\label{app:proof_lemma}
Let us first prove that for all nonnegative scalars $x_2, \dots, x_n,$
the optimal value of the  minimization problem below is equal to $(\sum_{i=2}^n \sqrt{x_i})^2$.
\[\min \sum_{i=2}^n a_ix_i \quad \text{ s.t. } \quad a_i > 0  \text{ for }
  i=2,\dots,n \quad\text{ and } \quad\sum_{i=2}^n \frac1{a_i} = 1 .\]
Let $\gamma$ stand for the optimal value of this optimization
problem. Taking
$a_i = \left(\sum_{j=2}^n \sqrt{x_j} \right) x_i^{-\frac 12}$ for
$i = 2, \dots, n$ (with the convention that $0^{-1} = +\infty$) shows
that $\gamma \le (\sum_{i=2}^n \sqrt{x_i})^2$. We now show that
$\gamma \ge (\sum_{i=2}^n \sqrt{x_i})^2$. Consider positive scalars
$a_2, \dots, a_n$ satisfying $\sum_{i=2}^n \frac1{a_i} = 1$.  Note
that
\[\sum_{i=2}^n \frac1{a_i} =  {\bf 1}^TA^{-1}{\bf 1}, \]
where ${\bf 1}^T \coloneqq (1, \dots, 1) \in \mathbb R^{n-1}$ and $A$ is the diagonal $(n-1) \times (n-1)$ matrix with the $a_i$ as diagonal elements.
By taking the Schur complement, the inequality $1 - {\bf 1}^TA^{-1}{\bf 1} \ge
0$ implies that $A \succeq {\bf1 1}^T$. Therefore, by  multiplying each
side of this matrix inequality by $\iu^T
\coloneqq (\sqrt{x_2}, \dots, \sqrt{x_n})$, we get
\(({\bf1}^T \iu)^2  \le \iu^T A \iu,\)
i.e.,
\((\sum_{i=2}^n \sqrt{x_i})^2 \le (\sum_{i=2}^n a_ix_i) ^2.\)  In
conclusion, $\gamma \ge  (\sum_{i=2}^n \sqrt{x_i})^2$.

Let us now prove that for any complex number $z$,
\[\max_{a \in \mathbb C, \frac1a + \frac1{\bar a} = 1} az + \bar a \bar z =  2(|z| + \RE{z}).\]
First notice that $a \in \mathbb C$ satisfies $\frac1a + \frac1{\bar a} = 1$ if and only if $a$ has the form $2\cos(\theta)e^{i\theta}$ for some $\theta \in \mathbb R$.

Write $z = |z| e^{i\alpha}$ for some $\alpha \in \mathbb R$. Then,
  \begin{align*}
    \max_\theta \cos(\theta) \RE{e^{i\theta} z}
    &= \max_\theta |z|\cos(\theta) \cos(\theta+\alpha)
    \\&= \frac12 |z| \max  (\cos(\alpha) + \cos(2\theta+\alpha))
    \\&=  \frac12 |z| (1+\cos(\alpha))
    \\&= \frac{|z| + \RE{z}}2.
  \end{align*}
The result follows as $az+\bar a\bar z = 4\RE{\cos(\theta) e^{i \theta}z}$.

\end{document}